\DeclareMathSymbol{\rightrightarrows}  {\mathrel}{AMSa}{"13}
\def\sk{\operatorname{sk}}
\def\max{\operatorname{max}}
\def\Im{\operatorname{Im}}
\def\varholim@#1#2{\mathop{\vtop{\ialign{##\crcr
        \hfil$#1\m@th\operator@font holim$\hfil\crcr
 \noalign{\nointerlineskip\kern\ex@}#2#1\crcr
 \noalign{\nointerlineskip\kern-\ex@}\crcr}}}}
\def\hocolim{\mathpalette\varholim@\rightarrowfill@} 
\def\hoinvlim{\mathpalette\varholim@\leftarrowfill@}
\newtheorem{theorem}{Theorem}
\newtheorem{lemma}[theorem]{Lemma}
\newtheorem{corollary}[theorem]{Corollary}
\theoremstyle{definition}
\newtheorem{example}[theorem]{Example}
\newtheorem{remark}[theorem]{Remark}
\begin{document}

\title{\bf Local data structures}
\author{{\bf J.F. Jardine}\thanks{Supported by NSERC.}}

\affil{jardine@uwo.ca}

\maketitle

\begin{abstract}
Local data structures are systems of neighbourhoods within data sets. Specifications of neighbourhoods can arise in multiple ways, for example, from global geometric structure (stellar charts), combinatorial structure (weighted graphs), desired computational outcomes (natural language processing), or sampling. These examples are discussed, in the context of a theory of neighbourhoods.

This theory is a step towards understanding clustering for large data sets. These clusters can only be approximated in practice, but approximations can be constructed from neighbourhoods via patching arguments that are derived from the Healy-McInnes UMAP construction. The patching arguments are enabled by changing the theoretical basis for data set structure, from metric spaces to extended pseudo metric spaces.
\end{abstract}


\section*{Introduction}

This paper is a preliminary discussion of local structures for
large data sets.

Potential objects of study include subsets $\mathcal{U} \subset \mathbb{R}^{N}$, where the data set $\mathcal{U}$ (or ``universe'') is essentially infinite, meaning that $\mathcal{U}$ is too large to analyze with available computational devices.

Alternatively, there may not be a metric space structure on the data set $\mathcal{U}$. Such objects $\mathcal{U}$ can arise as vertices of large weighted graphs $\Gamma$, which could describe data transfers that occur during a time interval. Other examples arise in the ``bag of words'' model natural language processing, which model has a combinatorial structure that is not graph theoretic. 

There could, finally, be no apparent geometric or combinatorial structure for $\mathcal{U}$, and its structure near a point may have to be approximated (or learned) by iterated sampling.
\medskip

In general, one wants to break up a data set $\mathcal{U}$ into smaller computable pieces $N$ that cover $\mathcal{U}$ in the sense that every $x \in \mathcal{U}$ is in some neighbourhood $N$, in the hope/expectation that analyses of the neighbourhoods $N$ can be assembled to a full or at least useful partial analysis of the universal data set $\mathcal{U}$. 
This is essentially the approach taken by the mapper algorithm \cite{CMS} (see Remark \ref{rem 12} below), and it can make perfect sense for clustering at relatively small distance scales.

The elements of a neighbourhood $N$ should be close to $x$ in some sense, but one has to address the question of how to find such neighbourhoods in a sea of data $\mathcal{U}$.
If there is no prior information about the structure or genesis of $\mathcal{U}$, the phrase ``close to $x$'' may not have much meaning. In good cases, there is information about local geometric or combinatorial structures that allows one to get started.
\medskip

Most generally, a neighbourhood $N$ of a point $x$ in a data set $\mathcal{U}$ is a suitably sized subset of $\mathcal{U}$ which contains $x$. If $\mathcal{U}$ is a metric space (or an extended pseudo metric space) then $N$ has a diameter $s(N)$, which is the maximum distance $d(x,y)$ for $y \in N$.

The inclusion $N \subset \mathcal{U}$ determines an inclusion of Vietoris-Rips complexes $V(N) \subset V(\mathcal{U})$.

If every $x \in \mathcal{U}$ has a specific choice of neighbourhood $N_{x}$, as in Section 3, then the collection of all such neighbourhoods determines an inclusion of filtered complexes
\begin{equation*}
  N(\mathcal{U}):=\cup_{x \in \mathcal{U}}\ V(N_{x}) \subset V(\mathcal{U}),
\end{equation*}
which complexes are filtered by distance in the usual way.
I say that $N(\mathcal{U})$ is the {\it neighbourhood complex} that is defined by the family of neighbourhoods $N=\{ N_{x} \}$.

The neighbourhood complex $V(N)$ is the mapper complex for the covering $V(N_{x}) \subset V(\mathcal{U})$ of the global Vietoris-Rips complex $V(\mathcal{U})$, as in \cite{CMS}.

Every element $y \ne x$ in a neighbourhood $N_{x}$ determines a ray
\begin{equation*}
  \{x,y\} \subset N_{x} \subset \mathcal{U},
\end{equation*}
and the collection of such rays determines
a filtered subcomplex
\begin{equation*}
  R(N_{x}) = \vee_{y \ne x}\ V(\{x,y\}) \subset V(N_{x}).
\end{equation*}
Taking the union
\begin{equation*}
  R(\mathcal{U}) = \cup_{x \in \mathcal{U}}\ R(N_{x}) \subset V(\mathcal{U})
\end{equation*}
defines the {\it ray subcomplex} $R(\mathcal{U})$, which is a subcomplex of both $V(\mathcal{U}$ and $N(\mathcal{U})$. 
\medskip

The ray subcomplex $R(\mathcal{U})$  is a filtered (or weighted) graph.

If the neighbourhoods $N_{x}$ consist of sets of $k$-nearest neighbours for the points of $\mathcal{U}$, then the ray subcomplex $R(\mathcal{U})$ is the $k$-nearest neighbours graph, which is a well-studied object. The $k$-nearest neighbours graph is used to construct the UMAP graph of \cite{HMc-2020}, \cite{UMAP-stab}, \cite{github}.
\medskip

The inclusions
\begin{equation*}
  R(\mathcal{U}) \subset N(\mathcal{U}) \subset V(\mathcal{U})
\end{equation*}
of filtered complexes induce surjections
\begin{equation*}
  \pi_{0}R_{s}(\mathcal{U}) \to \pi_{0}N_{s}(\mathcal{U}) \to \pi_{0}V_{s}(\mathcal{U})
\end{equation*}
for distance parameters $s$, which are analyzed in special cases in Sections 3 and 4.
There are good comparison results for finite $s$ for bounded neighbourhoods, which is the subject of Section 4. See Lemma \ref{lem 14},
Lemma \ref{lem 15}, Lemma \ref{lem 16} and Lemma \ref{lem 20}.

In that setting, the neighbourhood complex $N_{s}(\mathcal{U})$ for bounded neighbourhoods has the same $1$-skeleton as the global Vietoris-Rips complex $V_{s}(\mathcal{U})$ at small distance scales $s$, which makes the neighbourhood complex $N_{s}(\mathcal{U})$ a good approximation of $V_{s}(\mathcal{U})$ for clustering for such $s$.

At higher distance scales, the clusters of the ray complex $R_{s}(\mathcal{U})$ coincide with those of the neighbourhood complex $N_{s}(\mathcal{U})$. The outcome is that, for clustering, the neighbourhood complex $N(\mathcal{U})$ is a bridge between the ray complex $R(\mathcal{U})$ (a UMAP-like object) and the full Vietoris-Rips complex $V(\mathcal{U})$.

The basic ideas and constructions of this paper appear in the Sections 2 and 3, along with a discussion of the relationship between neighbourhoods and sequences of nearest neighbours.
With a view to potential applications (as in Section 5), we generally assume that $\mathcal{U}$ is an extended pseudo metric space, or an ep-metric space.  The basic ideas around ep-metric spaces are summarized in Section 1.

Subsequent results and calculations are determined by choices of neighbourhoods, which choices vary with the geometric or combinatorial structures of specific examples. 

The definitions and results of Sections 4, 6 and 7 are based on naive examples (or thought experiments) that motivate and illustrate these ideas:
\medskip

\noindent
1)\ 
The Gaia Archive $\mathcal{U}$ is a database of roughly a billion stars in the Milky Way. The raw data for the Archive is a set of scans that has been collected by the Gaia Space Observatory spacecraft, starting in 2014.
The scans return high resolution photometric and spectral data for stars within small apertures, and so the archive is constructed from an assembly of local data. The positions of the stars in the archive relative to the Sun are determined, after repeated observations and much computation.

These positions can be expressed as a function $p: \mathcal{U} \to \mathbb{R}^{3}$ that determines the members of the Archive $\mathcal{U}$ uniquely.
The position function $p$ is a type of dimension reduction. In the language of the mapper construction, it is a filter function \cite{CMS}.

From observation, if $x$ is a star in the archive $\mathcal{U}$, then there is a neighbourhood $N_{x} \subset \mathcal{U}$ of stars close to $x$ such that $N_{x}$ has a computable number of elements. We could insist that $N_{x}$ is a bounded neighbourhood, in that it has a bounded radius $s(N_{x})$ and it contains at most $k$ elements for some choice of integer $k$. 

This is an explicitly geometric example, which is closely aligned with methods that are presented in Section 4.
\medskip

\noindent
2)\
For some data sets, there is a graph structure $\Gamma$ with no apparent ambient metric space.

For example, a collection of data transfers between computer accounts within a (short enough) time interval can be given the structure of a sparse directed weighted graph, as in Example \ref{ex 30} below. The number of bytes transmitted by a transfer is its weight.

The vertices of these graphs have low valence. One knows all of the transfers $e: x \leftrightarrow y$ for each account $x$, and from this one builds a computable neighbourhood $N_{k}(x)$ of accounts which are separated from $x$ by at most $k$ transfer steps (or hops).

One needs a way of assigning weights $d(x,y)$ to the various $y \in N_{k}(x)$.
Starting with an account $x$, one could expect that the accounts $y$ with which it does the most ``business'' are the closest to $x$.
The elements $y$ of $N_{k}(x)$ which are closest to $x$ are defined ``inversely'' by the sum $\Sigma(x,y)$ of all weights of directed edge paths between $x$ and $y$. Then the distance  $d(x,y)$ can be defined by
\begin{equation*}
  d(x,y)= e^{-\Sigma(x,y)}
\end{equation*}
between $x$ and $y$ for each $y \in N_{k}(x)$.
\medskip

From the data of neighbourhoods and weights, the Healy-McInnes UMAP machine generates a global ep-metric $D$ on the set $Z$ of vertices of the graph $\Gamma$, with clusters given by the directed set $\pi_{0}V(Z,D)$, or equivalently (Theorem \ref{th 27}) by the directed set $\pi_{0}R(N)$ arising from the rays of the various neighbourhoods $N_{k}(x)$.

The point, ultimately, is that one uses the graph structure to find computable weighted neighbourhoods $N_{k}(x) $ for all vertices $x$ of a sparse weighted directed graph $\Gamma$. These local structures then patch together to define a global ep-metric on the full set of vertices of $\Gamma$, along with cluster constructions.

These ideas appear in Section 6. In broad outline, they apply
equally well to all sparse weighted graphs.
\medskip

There is a fundamental idea in play here: the UMAP construction creates global space-level structure and cluster computations from local information given by weighted neighbourhoods, with or without the existence of an ambient metric.

This observation is applied repeatedly in examples that are displayed here. We specify neighbourhoods with weights, and then feed these neighbourhoods to general machinery.

The relevant theoretical features of the UMAP construction are summarized in Section 5.  That section contains an alternate presentation of the UMAP graph, which is constructed by patching together rays without invoking most of the standard methods of UMAP --- see Theorem \ref{th 27}.
 \medskip

 \noindent
 3)\ Section 7 is a discussion of neighbourhoods of words in the ``continuous bag of words'' model from natural language processing (NLP). With such neighbourhoods in hand (and with appropriate definitions of weights), one again uses UMAP methods to construct an ep-metric space structure on the set of words $\mathcal{L}$ that of a corpus.
 
The methods of Section 7 extend to any finite set of strings of data elements, in which a local metric can be defined by proximity within strings.
 \medskip

 In the examples displayed so far, the local nature of a data set varies within a given geometric or combinatorial structure. These structures are in part determined by desired computational outcomes, and they are the starting points for calculations.
 
 One could, finally, be presented with a very large cloud of points $\mathcal{U}$ with an ep-metric space structure, but with no other information, from which one wants to approximate (or discover) a neighbourhood $N_{x}$ for a given point $x \in \mathcal{U}$.

 There seems to be no choice in such a case but to apply brute force methods that are based on repeated random sampling, with the goal of learning a description of a neighbourhood, or  ``$k$-complete'' neighbourhood $N_{x}$ for $x$. A potential method for doing so is described in Section 8.

 The $k$-complete neighbourhoods of this paper (see Sections 2 and 4) are strongly related to sets of $k$-nearest neighbours for a point $x$, but have the benefit of being uniquely defined, and are therefore easier to manipulate theoretically. Of course, the positive integer $k$ must be specified up front. 

 \bigskip

\tableofcontents

\section{Extended pseudo metric spaces}

An {\it extended pseudo-metric space} $(X,d)$, here called an {\it ep-metric space}, is a set $X$ together with a function $d:X \times X \to [0,\infty]$ such that the following conditions hold:
  \begin{itemize}
  \item[1)] $d(x,x)=0$,
  \item[2)] $d(x,y) = d(y,x)$,
  \item[3)] $d(x,z) \leq d(x,y) + d(y,z)$.
  \end{itemize}
  There is no condition that $d(x,y)=0$ implies $x$ and $y$ coincide --- this is where the adjective ``pseudo'' comes from, and the gadget is ``extended'' because we allow infinite distance.

  A metric space $(X,d)$ is an ep-metric space for which $d(x,y)=0$ implies $x=y$, and all distances $d(x,y)$ are finite.

  There is a category $\mathbf{ep-met}$ of ep-metric spaces, with morphisms $f:(X,d) \to (Y,d')$ given by functions $f:X \to Y$ which are non-expanding in the sense that $d'(f(x),f(y)) \leq d(x,y)$ for all $x,y \in X$.

  The category $\mathbf{ep-met}$ is a cocomplete in the sense that it has all small colimits.

  In effect, the coproduct $\sqcup_{i}\ (X_{i},d{i})$ is the  disjoint union set $\sqcup_{i}\ X_{i}$, equipped with the ep-metric $d$ defined by
  \begin{equation*}
    d(x,y) = \begin{cases}
      d_{i}(x,y) & \text{if $x,y \in X_{i}$ for some $i$, and} \\
      \infty & \text{otherwise.}
    \end{cases}
  \end{equation*}

  Coequalizers are constructed from a quotient function. Suppose that $(X,d)$ is an ep-metric space and that $p: X \to Y$ is a surjective function. Then $Y$ has an ep-metric $D$ such that for any pair $z,w \in Y$,
  \begin{equation*}
    D(z,w) = \inf_{P}\ \sum d(x_{i},y_{i}),
  \end{equation*}
  where each ``path'' $P$ consists of pairs of points $\{x_{i},y_{i}\},\ i \leq n$ in $X$ such that $z=p(x_{0})$, $w = p(y_{n})$ and $p(y_{i}) = p(x_{i+1})$ for $i \leq n-1$. The function $p$ defines a map $p: (X,d) \to (Y,D)$ of ep-metric spaces that has the universal property of quotients.
  \medskip

  \begin{example}
    Suppose that $(X,d)$ and $(X,d')$ are ep-metric spaces having the same set of oelements $X$. Then the amalgamation (wedge) $(X,d) \vee (X,d')$ in the ep-metric space category is an ep-metric space structure on $X$ with
    \begin{equation*}
      D(z,w) = \inf_{P}\ \sum D(x_{i},x_{i+1}),
    \end{equation*}
    where each path $P$ is a string of elements $z=x_{0},x_{i}, \dots ,x_{n}=w$ of $X$ and
    \begin{equation*}
      D(x_{i},y_{i}) = \min\ \{d(x_{i},x_{i+1}),d'(x_{i},x_{i+1})\}.
      \end{equation*}
    \end{example}
  
Each finite ep-metric space $\mathcal{U}$ has a family of Vietoris-Rips complexes $V_{s}(\mathcal{U})$, which are parameterized by distance $s$. Explicitly, $V_{s}(\mathcal{U})$ is the abstract simplicial complex (or poset) whose simplices are the finite subsets $\sigma=\{x_{0}, \dots ,x_{k}\}$ of $\mathcal{U}$ such that $d(x_{i},x_{j}) \leq s$. The simplex $\sigma$ is a $k$-simplex, and it has cardinality $k+1$.

  As in the standard case, there is an ascending family of complexes
  \begin{equation*}
    V_{s}(\mathcal{U}) \subset V_{t}(\mathcal{U}),\ s \leq t,
  \end{equation*}
  with $\mathcal{U} = V_{0}(\mathcal{U})$ (discrete complex on the set $\mathcal{U}$).

  The limiting object $V_{\infty}(\mathcal{U})$ is a simplex $\Delta^{\mathcal{U}}$ with vertices $\mathcal{U}$, but it is not the case that $V_{\infty}(\mathcal{U})$ is a union of the subobjects $V_{s}(\mathcal{U})$ with $s$ finite. Write
  \begin{equation*}
    V_{<\infty}(\mathcal{U}) = \cup_{s < \infty}\ V_{s}(\mathcal{U}).
  \end{equation*}

  The simplicial set $V_{< \infty}(\mathcal{U})$ is a finite disjoint union of contractible components.

  \section{Neighbourhoods}
    
Suppose that $(Z,d)$ is a finite ep-metric space and that $x \in Z$.

    In all of the following,
\begin{equation*}
  Z(x,s) = \{y \in Z\ \vert\ d(x,y) \leq s\}
  \end{equation*}
is the closed ball of radius $s$ in $Z$ that is centred at $x$.

A {\bf neighbourhood} $N$ of $x$ is a subset 
$N$  of $Z$ with $x \in N$ and $d(x,y) < \infty$.
\medskip

    A neighbourhood $N$ aquires an ep-metric space structure from $Z$, and defines a filtered subcomplex $V(N) \subset V(Z)$ of the Vietoris-Rips complex $V(Z)$.

    The {\bf radius} $s(N)$ of the neighbourhood $N$ is defined by
    \begin{equation*}
      s(N) = \max_{y \in N}\ d(x,y).
    \end{equation*}
    Then $s(N) < \infty$ by assumption.
     
    The neighbourhood $N$ is said to be {\bf complete} if $N=Z(x,s_{N})$. 
\medskip

A neighbourhood $N$ of $x$ is a set of {\bf nearest neighbours} if $d(x,z) \geq s(N)$ for all $z \in Z-N$. If $N = \{x,x_{1}, \dots ,x_{k}\}$ is a set of nearest neighbours (i.e. with cardinality $k+1$), then $N$ is a set of {\bf $k$-nearest neighbours}.

A nearest neighbour $y$ for $x$ with $d(x,y) < \infty$ can be identified with a neighbourhood $N = \{x,y\}$ of nearest neighbours. This means that $d(x,y) \leq d(x,z)$ for all $z \in Z-\{x\}$. The distance $d(x,y)$ could be $0$ in general.

Every complete neighbourhood $N = Z(x,s_{N})$ is a set of nearest neighbours for $x$, and is a set of $n$-nearest neighbours, where $n = \vert N \vert - 1$.

\begin{lemma}\label{lem 2}
Suppose that $N = \{x,x_{1}, \dots ,x_{k}\}$ is a set of nearest neighbours for $x$, and that the $x_{i}$ are ordered such that
\begin{equation*}
  d(x,x_{1}) \leq d(x,x_{2}) \leq \dots \leq d(x,x_{k}).
\end{equation*}
Then $x_{i}$ is a nearest neighbour of $x$ in the subset $Z-\{x_{1}, \dots ,x_{i-1}\}$.
\end{lemma}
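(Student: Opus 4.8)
The plan is to unwind the definition of a nearest neighbour directly, applied to the sub-ep-metric space $W_i := Z - \{x_1, \dots, x_{i-1}\}$, and then to verify the single defining inequality by splitting the competing points into two families. First I would observe that both $x$ and $x_i$ lie in $W_i$, since the $x_j$ are distinct and $x \ne x_j$ for all $j$; so the assertion ``$x_i$ is a nearest neighbour of $x$ in $W_i$'' is meaningful and, by definition, amounts to the inequality $d(x, x_i) \leq d(x, z)$ for every $z \in W_i - \{x\}$.

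Next I would describe the set of competitors. The points $z \in W_i - \{x\}$ are exactly the elements of $Z$ other than $x, x_1, \dots, x_{i-1}$, and these fall into two disjoint classes: those of the form $x_j$ with $j \geq i$ (the members of $N$ that survive the deletion), and those lying in the complement $Z - N$.

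For the first class, the hypothesised ordering $d(x, x_1) \leq \dots \leq d(x, x_k)$ gives $d(x, x_i) \leq d(x, x_j)$ whenever $j \geq i$, which is precisely the required inequality. For the second class, I would invoke the defining property of a set of nearest neighbours: $d(x, z) \geq s(N)$ for every $z \in Z - N$. Since $s(N) = \max_{y \in N}\ d(x, y) = d(x, x_k)$ and $x_i \in N$, we have $s(N) \geq d(x, x_i)$, and hence $d(x, z) \geq s(N) \geq d(x, x_i)$. Combining the two cases yields the inequality for all $z \in W_i - \{x\}$, completing the argument.

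I do not expect a genuine obstacle here; the statement is essentially bookkeeping. The one point deserving attention is that the radius $s(N)$ is realised by the \emph{largest} of the distances, namely $d(x, x_k)$, so that every element of $N$---in particular $x_i$---sits within $s(N)$ of $x$. This is what allows the outside-$N$ bound $d(x, z) \geq s(N)$ to dominate $d(x, x_i)$ as well, and it is the only place where the monotone indexing interacts with the nearest-neighbour hypothesis.
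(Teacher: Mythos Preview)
Your proof is correct and follows essentially the same approach as the paper: both arguments split the competitors in $Z-\{x_{1},\dots,x_{i-1}\}$ into the remaining $x_{j}$ with $j\geq i$ (handled by the ordering) and the points outside $N$ (handled by the nearest-neighbour bound $d(x,z)\geq s(N)=d(x,x_{k})$). The paper simply compresses this into the single chain $d(x,x_{i})\leq\dots\leq d(x,x_{k})\leq d(x,z)$.
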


\begin{proof}
  We have
  \begin{equation*}
d(x,x_{i}) \leq d(x,x_{i+1}) \leq \dots \leq d(x,x_{k}) \leq d(x,z)
  \end{equation*}
  for all $z$ outside of $N$. It follows that $d(x,x_{i})) \leq d(x,w)$ for all $w \in Z-\{x_{1}, \dots x_{i-1}\}$.
    \end{proof}

\begin{lemma}\label{lem 3}
Suppose that the neighbourhood $N$ is a set of nearest neighbours for $x$ and $z \in X-N$ is chosen such that $d(x,z) < \infty$ and $d(x,z) \leq d(x,v)$ for all $v \in Z-N$. Then the set $N \cup \{z\}$ is a set of nearest neighbours for $x$.
\end{lemma}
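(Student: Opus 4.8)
The plan is to unwind the definition of ``set of nearest neighbours'' directly; the only real content is a computation of the radius of the enlarged neighbourhood. Write $N' = N \cup \{z\}$. First I would confirm that $N'$ is itself a neighbourhood of $x$: we have $x \in N \subseteq N'$, and every point of $N'$ lies at finite distance from $x$, since this holds on $N$ by hypothesis and $d(x,z) < \infty$ by assumption.

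Next I would compute the radius of $N'$. By definition $s(N') = \max_{y \in N'} d(x,y) = \max\{s(N),\, d(x,z)\}$. Because $z \in Z - N$ and $N$ is already a set of nearest neighbours, the defining inequality $d(x,z) \geq s(N)$ holds, so the maximum is attained at $z$ and $s(N') = d(x,z)$. This is the step that makes everything work: adjoining the closest exterior point raises the radius to exactly $d(x,z)$.

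Finally I would verify the nearest-neighbour condition for $N'$, namely $d(x,w) \geq s(N')$ for every $w \in Z - N'$. The key observation is that $Z - N' = (Z - N) \setminus \{z\} \subseteq Z - N$, so any such $w$ lies in $Z - N$. The hypothesis that $d(x,z) \leq d(x,v)$ for all $v \in Z - N$ then gives $d(x,w) \geq d(x,z) = s(N')$, which is exactly what is required, and completes the verification.

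I do not anticipate a genuine obstacle, as the argument is a direct manipulation of the definitions once the radius is identified. The one point deserving care is precisely that radius computation: one must invoke the hypothesis that $N$ is \emph{already} a set of nearest neighbours in order to know $d(x,z) \geq s(N)$, and hence that $z$ is the farthest point of $N'$. Without this, $z$ need not achieve the maximum defining $s(N')$, and the subsequent comparison $d(x,w) \geq s(N')$ would not follow from the minimality of $d(x,z)$ alone.
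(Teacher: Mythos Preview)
Your proof is correct and follows essentially the same approach as the paper's: both identify $s(N\cup\{z\})=d(x,z)$ and then use the minimality of $d(x,z)$ over $Z-N$ to bound $d(x,w)$ for $w$ outside $N\cup\{z\}$. Your version is simply more explicit about why $N'$ is a neighbourhood and why the radius equals $d(x,z)$, whereas the paper asserts the latter without comment.
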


\begin{proof}
The radius $s_{z}$ of $N \cup \{z\}$ is $d(x,z)$. Choose $v \in Z - (N \cup \{z\})$. Then $s(N) \leq d(x,v)$, and $d(x,z) \leq d(x,v)$ by the minimality of $d(x,z)$. It follows that $s(N \cup \{z\}) \leq d(x,v)$.
  \end{proof}

\begin{remark}
Applying Lemma \ref{lem 3} inductively gives nearest neighbourhoods $N$ of $x$ of all possible finite cardinalities $\vert N \vert$ with $\vert N \vert \leq \vert Z \vert$.
\end{remark}

There is a function $d_{x}: Z \to [0,\infty]$ with $d_{x}(y) = d(x,y)$. A nearest neighbour for $x$ is an element $z \in Z -\{x\}$ such that $d_{x}(z) < \infty$ and $d_{x}(z)$ is minimal.

For such an element $z$, write $s = d_{x}(z)$. Then $s$ is the minimum finite value of the image $d_{x}(Z)$, and $z \in Z_{x}(s)$, where
  \begin{equation*}
    Z_{x}(s) = d_{x}^{-1}(s)
  \end{equation*}
  is the fibre (pre-image) of $d_{x}$ over $s$.

  \begin{lemma}\label{lem 5}
  Suppose that $N$ is a set of nearest neighbours for $x$, and suppose that $\{s_{1}, \dots ,s_{p}\}$ is the set of elements of the image $d_{x}(N)$, with $s_{1} < \dots s_{p}$. Then $\{s_{1}, \dots ,s_{p}\}$ is a set of smallest finite elements of $d_{x}(Z)$, and
\begin{equation*}
  N = Z_{x}(s_{1}) \cup \dots \cup Z_{x}(s_{p-1}) \sqcup F
\end{equation*}
where $F \subset Z_{x}(s_{p})$.
  \end{lemma}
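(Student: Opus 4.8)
The plan is to unwind the definition of a set of nearest neighbours and exploit the single inequality it provides. Since $s_p$ is the largest value in $d_x(N)$, it coincides with the radius $s(N) = \max_{y \in N} d(x,y)$, so the nearest-neighbour hypothesis says precisely that every $z \in Z - N$ satisfies $d(x,z) \geq s_p$. Equivalently, by contraposition, every point of $Z$ lying at finite distance strictly less than $s_p$ from $x$ must already belong to $N$. This one observation drives both halves of the statement.

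First I would establish the ``smallest finite elements'' claim. Let $t$ be any finite value in the image $d_x(Z)$ with $t < s_p$, realized by some $z$ with $d(x,z) = t$. By the observation above, $z \in N$, so $t \in d_x(N) = \{s_1, \dots, s_p\}$; since $t < s_p$ this forces $t \in \{s_1, \dots, s_{p-1}\}$. Thus the finite values of $d_x$ lying below $s_p$ are exactly $s_1 < \dots < s_{p-1}$, and adjoining $s_p$ itself exhibits $\{s_1, \dots, s_p\}$ as the $p$ smallest distinct finite values of $d_x(Z)$.

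Next I would prove the fibrewise decomposition. For $i < p$ and any $w \in Z_x(s_i)$ we have $d(x,w) = s_i < s_p = s(N)$, so the same observation gives $w \in N$; hence $Z_x(s_i) \subseteq N$ for each $i < p$. Conversely, every $y \in N$ has $d(x,y) \in d_x(N) = \{s_1, \dots, s_p\}$, so $y \in Z_x(s_i)$ for some $i \leq p$, giving $N \subseteq Z_x(s_1) \cup \dots \cup Z_x(s_p)$. Setting $F := N \cap Z_x(s_p) \subseteq Z_x(s_p)$ and noting that the fibres $Z_x(s_i)$ over distinct values are pairwise disjoint, I obtain $N = Z_x(s_1) \cup \dots \cup Z_x(s_{p-1}) \sqcup F$ as a disjoint decomposition.

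The argument is essentially a bookkeeping exercise, so there is no deep obstacle; the one point requiring care is the role of the top value $s_p = s(N)$. The nearest-neighbour condition only controls points at distance strictly below $s(N)$, so $N$ need not contain the whole fibre $Z_x(s_p)$ --- points at distance exactly $s_p$ may lie either inside or outside $N$. This is exactly why the decomposition retains the full lower fibres $Z_x(s_1), \dots, Z_x(s_{p-1})$ but records only the subset $F$ of the top fibre, and it is the detail I would be most careful to state correctly.
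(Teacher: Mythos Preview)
Your argument is correct. You work directly from the definition: since $s_{p}=s(N)$, the nearest-neighbour hypothesis $d(x,z)\geq s(N)$ for $z\in Z-N$ becomes, by contraposition, ``$d(x,z)<s_{p}\Rightarrow z\in N$,'' and everything follows from that single implication by inspecting fibres.

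The paper instead argues by induction on $\vert N\vert$, using Lemma~\ref{lem 2} (peeling off the closest point leaves a set of nearest neighbours in the complement) and Lemma~\ref{lem 3} (adjoining a closest outside point preserves the nearest-neighbour property). That approach builds $N$ up one point at a time and reads off the fibre decomposition from the inductive construction. Your route is more elementary: it bypasses the auxiliary lemmas and the induction entirely, needing only the defining inequality and the observation that $s_{p}=s(N)$. What the inductive argument buys is a tighter link to the surrounding material (Lemmas~\ref{lem 2} and \ref{lem 3} are used elsewhere), but for this statement in isolation your direct proof is shorter and arguably clearer.
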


  \begin{proof}
This is proved by induction on $\vert N\vert$, using Lemma \ref{lem 2} and Lemma \ref{lem 3}.
  \end{proof}

  If the neighbourhood $N=\{x,x_{1}, \dots ,x_{k}\}$ is a set of nearest neighbours of $x$ with
  \begin{equation*}
    d(x,x_{1}) \leq \dots \leq d(x,x_{k}),
  \end{equation*}
  one says that $(x_{1},x_{2}, \dots ,x_{k})$ is a {\bf sequence of $k$-nearest neighbours} for $x$.
 
\begin{lemma}\label{lem 6}
  Suppose that $\{ y_{1}, \dots ,y_{k}\}$ is a set of distinct elements of $X-\{x\}$ with
\begin{equation*}
  d(x,y_{1}) \leq d(x,y_{2}) \leq \dots \leq d(x,y_{k}) < \infty.
\end{equation*}
If $(x_{1}, \dots ,x_{k})$ is a sequence of $k$-nearest neighbours for $x$, then $d(x,x_{i}) \leq d(x,y_{i})$ for $1 \leq i \leq k$.
\end{lemma}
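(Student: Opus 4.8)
The plan is to argue by contradiction, combining the defining property of a set of nearest neighbours with a pigeonhole count. First I would set up notation: write $N = \{x, x_1, \dots, x_k\}$, so that by hypothesis $N$ is a set of nearest neighbours for $x$ with radius $s(N) = d(x,x_k)$, and therefore $d(x,z) \ge d(x,x_k)$ for every $z \in Z - N$. The target inequality $d(x,x_i) \le d(x,y_i)$ is to be established for each fixed $i$ separately.

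Fix $i$ and suppose, toward a contradiction, that $d(x,y_i) < d(x,x_i)$. Since the $y_j$ are sorted by distance, all $i$ of the distinct points $y_1, \dots, y_i$ then satisfy $d(x,y_j) \le d(x,y_i) < d(x,x_i)$. The key step is to bound how many of these $y_j$ can lie in $N$. If $y_j = x_l$ for some $l$, then $d(x,x_l) = d(x,y_j) < d(x,x_i)$; because the $x$'s are sorted, this forces $l \le i-1$. Hence the $y_j$ lying in $\{x_1,\dots,x_k\}$ form a subset of $\{x_1,\dots,x_{i-1}\}$, and since the $y_j$ are distinct (and none equals $x$), at most $i-1$ of the $i$ points $y_1,\dots,y_i$ lie in $N$. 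By pigeonhole at least one, say $y_{j_0}$, lies in $Z - N$.

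Now I would derive the contradiction: the nearest-neighbour property gives $d(x,y_{j_0}) \ge d(x,x_k) \ge d(x,x_i)$, whereas $y_{j_0} \in \{y_1,\dots,y_i\}$ gives $d(x,y_{j_0}) \le d(x,y_i) < d(x,x_i)$. These are incompatible, so the assumption $d(x,y_i) < d(x,x_i)$ is impossible, and $d(x,x_i) \le d(x,y_i)$ follows for every $i$.

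The one delicate point, which I expect to be the main obstacle, is the counting step in the presence of ties: the orderings here are non-strict, so $d_x$ need not be injective on the $x_l$. This is precisely why I take the contradiction hypothesis to be a \emph{strict} inequality and why the bound $l \le i-1$ is extracted from the sorted order of the $x$'s rather than from any injectivity of $d_x$. An alternative would be an induction on $i$ built from Lemma \ref{lem 2} and Lemma \ref{lem 3}, reconstructing the nearest-neighbour set one element at a time, but the direct pigeonhole argument above is cleaner and sidesteps that bookkeeping.
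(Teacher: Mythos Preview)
Your argument is correct. The pigeonhole step is sound: from the strict inequality $d(x,y_j) < d(x,x_i)$ you correctly extract $l \le i-1$ whenever $y_j = x_l$, and ties among the $x_l$ cause no trouble precisely because the contradiction hypothesis is strict.

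Your route differs from the paper's. The paper argues by induction on $i$: having $d(x,x_r)\le d(x,y_r)$, it splits on whether $d(x,x_r) < d(x,y_{r+1})$ or $d(x,x_r) = d(x,y_{r+1})$, and in each case uses that $x_{r+1}$ is a nearest neighbour of $x$ in $Z-\{x_1,\dots,x_r\}$ (this is Lemma~\ref{lem 2}). Your proof avoids both the induction and the case split, using only the single global inequality $d(x,z)\ge s(N)=d(x,x_k)$ for $z\notin N$, together with a count. What you gain is self-containment --- you never need Lemma~\ref{lem 2} or the incremental description of nearest-neighbour sets --- and a cleaner handling of ties. What the paper's inductive approach buys is a more explicit connection to the stepwise build-up of nearest neighbours that is used elsewhere in that section (Lemmas~\ref{lem 2}, \ref{lem 3}, \ref{lem 5}, \ref{lem 9}), so it fits the local narrative. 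Your closing remark anticipates exactly this alternative.
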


\begin{proof}
  $d(x,x_{1}) \leq d(x,y_{1})$, since $x_{1}$ is a nearest neighbour.
  \medskip

Suppose that $d(x,x_{i}) \leq d(x,y_{i})$ for $i \leq r$. Then
\smallskip

\noindent
  1)\ If $d(x,x_{r}) < d(x,y_{r+1})$ then $d(x,x_{r+1}) \leq d(x,y_{r+1})$ by minimality.
  \smallskip

  \noindent
  2)\ If $d(x,x_{r}) = d(x,y_{r+1})$ then $y_{r+1}$ is a nearest neighbour of $x$ in $Z-\{x_{1},\dots,x_{r}\}$, and so $d(x,x_{r+1}) = d(x,y_{r+1})$.
\end{proof}

\begin{corollary}
  Suppose that $(x_{1}, \dots ,x_{k})$ and $(y_{1}, \dots ,y_{k})$ are sequences of $k$-nearest neighbours for $x$. Then $d(x,x_{i}) = d(x,y_{i})$ for all $i$.
\end{corollary}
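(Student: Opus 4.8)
The plan is to derive the equality by applying Lemma \ref{lem 6} twice, once with the roles of the two sequences interchanged, and then combining the two resulting inequalities. The only real content is to observe that each sequence of $k$-nearest neighbours simultaneously qualifies as a ``distinguished'' ordered list of nearest neighbours (the hypothesis on $(x_{1},\dots,x_{k})$ in Lemma \ref{lem 6}) and as an ordered set of distinct finite-distance elements (the hypothesis on $\{y_{1},\dots,y_{k}\}$ there), so that Lemma \ref{lem 6} can be fed either sequence in either slot.

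First I would check the hypotheses. By the definition of a sequence of $k$-nearest neighbours, the list $(x_{1},\dots,x_{k})$ consists of the distinct elements $x_{1},\dots,x_{k}$ of the nearest-neighbour set $N=\{x,x_{1},\dots,x_{k}\}$, ordered so that $d(x,x_{1})\leq\dots\leq d(x,x_{k})$, and each $d(x,x_{i})$ is finite since $s(N)<\infty$. Exactly the same is true of $(y_{1},\dots,y_{k})$. Hence both sequences meet all the conditions required of both arguments of Lemma \ref{lem 6}.

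Next I would invoke Lemma \ref{lem 6} in the first direction, taking $(x_{1},\dots,x_{k})$ as the sequence of $k$-nearest neighbours and $\{y_{1},\dots,y_{k}\}$ as the ordered distinct set; this yields $d(x,x_{i})\leq d(x,y_{i})$ for $1\leq i\leq k$. Then I would apply Lemma \ref{lem 6} a second time with the roles reversed, using $(y_{1},\dots,y_{k})$ as the sequence of $k$-nearest neighbours and $\{x_{1},\dots,x_{k}\}$ as the ordered distinct set, obtaining $d(x,y_{i})\leq d(x,x_{i})$ for all $i$. Combining the two inequalities gives $d(x,x_{i})=d(x,y_{i})$ for every $i$, as required.

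There is essentially no obstacle here; the statement is a formal symmetrization of Lemma \ref{lem 6}. The only point that requires a moment's care is verifying that a sequence of $k$-nearest neighbours really does have the same shape as the generic ordered tuple appearing in the hypothesis of Lemma \ref{lem 6} (distinctness, monotone finite distances), so that the lemma applies verbatim with the sequences placed in either position.
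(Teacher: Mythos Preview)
Your proof is correct and is exactly the intended argument: the paper states the corollary without proof, leaving it as the immediate symmetrization of Lemma~\ref{lem 6} that you carry out. Your verification that each sequence of $k$-nearest neighbours satisfies the hypotheses required of both slots in Lemma~\ref{lem 6} is the only thing to check, and you have done so.
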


\begin{corollary}
Suppose that $W$ is a finite ep-metric space, and the inclusion $Z \subset W$ induces an ep-metric structure on the subset $Z$. Supppose that $x \in Z$. Suppose that $(w_{1}, \dots ,w_{k})$ and $(z_{1}, \dots ,z_{k})$ are sequences of $k$-nearest neighbours for $x$ in $W$ and $Z$, respectively. Then $d(x,w_{i}) \leq d(x,z_{i})$ for $1 \leq i \leq k$.
  \end{corollary}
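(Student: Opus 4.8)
The plan is to recognize this as a direct specialization of Lemma \ref{lem 6}, applied in the ambient space $W$. First I would record the compatibility of metrics: since the ep-metric on $Z$ is the one induced by the inclusion $Z \subset W$, we have $d(x,y)$ computed in $Z$ equal to $d(x,y)$ computed in $W$ for every pair $x,y \in Z$. Consequently all of the distances $d(x,z_{i})$ appearing in the statement are unambiguous, whether read in $Z$ or in $W$, and I may freely regard the $z_i$ as points of $W$.

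Next I would observe that the sequence $(z_{1}, \dots ,z_{k})$ of $k$-nearest neighbours for $x$ in $Z$ supplies exactly the input required by Lemma \ref{lem 6} relative to $W$. Indeed, $z_{1}, \dots ,z_{k}$ are $k$ distinct elements of $Z-\{x\} \subset W-\{x\}$, and by the definition of a sequence of $k$-nearest neighbours we have
\begin{equation*}
  d(x,z_{1}) \leq d(x,z_{2}) \leq \dots \leq d(x,z_{k}) < \infty.
\end{equation*}
Thus $\{z_{1}, \dots ,z_{k}\}$ is a set of distinct elements of $W-\{x\}$ with finite, monotonically ordered distances from $x$, which is precisely the hypothesis on the family $\{y_1,\dots,y_k\}$ in Lemma \ref{lem 6}.

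Finally I would apply Lemma \ref{lem 6} with ambient space $W$, taking $(x_{1}, \dots ,x_{k}) = (w_{1}, \dots ,w_{k})$ as the sequence of $k$-nearest neighbours for $x$ in $W$ and setting $y_{i} = z_{i}$. The lemma then yields $d(x,w_{i}) \leq d(x,z_{i})$ for $1 \leq i \leq k$, which is the assertion.

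I do not expect a genuine obstacle here: the entire inductive content lives in Lemma \ref{lem 6}, and this corollary is a clean instance of it. The only point deserving care is the verification that the $z_{i}$ are legitimate competitors in $W$ --- that is, that restricting the metric does not alter distances and that $Z-\{x\}$ sits inside $W-\{x\}$ --- after which the coordinatewise comparison is automatic. Intuitively, enlarging the ambient space from $Z$ to $W$ can only introduce new, possibly nearer, neighbours, so the $i$-th nearest distance can only decrease; Lemma \ref{lem 6} makes this precise position by position.
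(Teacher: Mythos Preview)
Your proposal is correct and matches the paper's intended argument: the corollary is stated without proof precisely because it is an immediate application of Lemma \ref{lem 6} in the ambient space $W$, with $(w_1,\dots,w_k)$ playing the role of the nearest-neighbour sequence and $(z_1,\dots,z_k)$ the competing family. Your care in noting that the induced metric on $Z$ agrees with the ambient one, and that $Z-\{x\}\subset W-\{x\}$, is exactly the routine check needed to make the invocation of Lemma \ref{lem 6} legitimate.
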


\begin{lemma}\label{lem 9}
  Suppose that $(x_{1}, \dots ,x_{k})$ is a sequence of nearest neighbours for $x$ in $Z$, and that $\{y_{1}, \dots ,y_{k}\}$ is a sequence of distinct elements of $Z$ with $d(x,y_{1}) \leq \dots \leq d(x,y_{k}) < \infty$.

  If $d(x,x_{i}) = d(y,y_{i})$ for all $i$, then $(y_{1}, \dots ,y_{k})$ is a sequence of nearest neighbours for $x$.
  \end{lemma}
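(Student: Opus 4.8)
The plan is to reduce the statement to its definition and then verify it by a counting argument inside a single distance ball. Since the ordering $d(x,y_1)\le\dots\le d(x,y_k)$ is assumed, all that must be shown is that $N_y:=\{x,y_1,\dots,y_k\}$ is a \emph{set} of nearest neighbours for $x$; that is, writing $s=d(x,y_k)$ for its radius, that $d(x,z)\ge s$ for every $z\in Z-N_y$. The hypothesis $d(x,x_i)=d(x,y_i)$ for all $i$ says the two sequences have identical distance profiles, so in particular the radius $s=d(x,y_k)=d(x,x_k)$ coincides with the radius of $N_x:=\{x,x_1,\dots,x_k\}$, and for each index $j$ the element $y_j$ sits at distance $s$ from $x$ exactly when $x_j$ does.

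First I would isolate the ball $A=\{w\in Z\mid d(x,w)<s\}$. Because $(x_1,\dots,x_k)$ is a sequence of nearest neighbours, any $w\notin N_x$ has $d(x,w)\ge s$, so $A\subseteq N_x$; concretely $A=\{x\}\cup\{x_j\mid d(x,x_j)<s\}$, and if $m$ denotes the number of indices $j$ with $d(x,x_j)<s$ then $|A|=m+1$. Now transport this count across the profile: the matching $d(x,y_j)=d(x,x_j)$ shows that the indices $j$ with $d(x,y_j)<s$ are exactly the same $m$ indices, so $y_1,\dots,y_m$ are $m$ distinct elements, each lying in $A$ and (being distinct from $x$) different from $x$. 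Then $\{x,y_1,\dots,y_m\}$ is a set of $m+1$ distinct elements of $A$, and since $|A|=m+1$ this forces $A=\{x,y_1,\dots,y_m\}\subseteq N_y$. Consequently any $z\in Z-N_y$ lies outside $A$, i.e. $d(x,z)\ge s$, which is exactly the required nearest-neighbour inequality. Alternatively, one could phrase the same computation through the fibre decomposition of Lemma~\ref{lem 5}, observing that the lower fibres $Z_x(s_1),\dots,Z_x(s_{p-1})$ are filled completely by the $y_i$.

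The step I expect to be the main obstacle is the cardinality bookkeeping that upgrades ``the $y_i$ realize the same distances with the same multiplicities'' to ``the $y_i$ exhaust the small ball $A$'': this is where distinctness of the $y_i$ is essential, and where one must be careful about the degenerate behaviour of the pseudo-metric near distance $0$ (for instance the possibility that several points, including $x$ itself, sit at distance $0$ from $x$, or that $s=0$, in which case $A$ is empty and the condition is vacuous). Handling whether the $y_j$ are automatically distinct from $x$ — and excluding the trivial collapse $y_j=x$ — is the only genuinely delicate point; once the count $|A|=m+1$ is matched exactly by $m+1$ distinct members of $N_y$, the remainder is a direct unwinding of the definition of a set of nearest neighbours.
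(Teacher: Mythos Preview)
Your argument is correct and takes a genuinely different route from the paper's proof. The paper proceeds by induction on $i$, showing that each prefix $\{y_1,\dots,y_i\}$ is a set of nearest neighbours; at the inductive step it splits into the case $d(x,x_i)=d(x,x_{i+1})$ (trivial extension within a fibre) and the case $d(x,x_i)<d(x,x_{i+1})$, where a fibre comparison forces the equality of sets $\{y_1,\dots,y_i\}=\{x_1,\dots,x_i\}$, after which $y_{i+1}$ is automatically minimal in the complement. You instead carry out a single counting argument at the top level: the open ball $A=\{w:d(x,w)<s\}$ has exactly $m+1$ elements since it sits inside $N_x$, and the $m+1$ distinct elements $x,y_1,\dots,y_m$ of $N_y$ lying in $A$ must therefore exhaust it, so anything outside $N_y$ lies outside $A$. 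This is cleaner --- it avoids both the induction and the case split --- though the paper's version has the incidental bonus of showing that every prefix $(y_1,\dots,y_i)$ is itself a sequence of nearest neighbours. Your flagged concern about the possibility $y_j=x$ is legitimate, but it is a gap in the \emph{statement} rather than in your argument (compare the hypothesis of Lemma~\ref{lem 6}, which explicitly places the $y_i$ in $Z-\{x\}$); the paper's own proof is equally silent on this point.
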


\begin{proof}
  $d(x,y_{1}) = d(x,x_{1}) \leq d(x,z)$ for all $z$, so that $y_{1}$ is a nearest neighbour for $x$ in $Z$.
  \medskip
  
  Inductively, suppose that $\{y_{1}, \dots ,y_{i}\}$ is a set of nearest neighbours for $x$.

  Suppose that $d(x,x_{i}) = d(x,x_{i+1})$. Then $d(x,y_{i})=d(x,y_{i+1})$, and so $\{y_{1},\dots,y_{i+1}\}$ is a set of nearest neighbours.
  
  If $d(x,x_{i}) < d(x,x_{i+1})$, then $\{y_{1}, \dots ,y_{i}\} = \{x_{1}, \dots ,x_{i}\}$ by comparing fibres $Z_{x}(s)$, so that $y_{i+1}$ is the nearest neighbour of $x$ in $Z - \{y_{1}, \dots ,y_{i}\}$. 
\end{proof}

We close this section with a discussion of $k$-complete neighbourhoods.
\medskip

The image of the distance function $d_{x}: Z \to [0,\infty]$  has the form
\begin{equation*}
  \Im(d_{x}) = \{s_{1},s_{2}, \dots \},
\end{equation*}
where there are strict inequalities $s_{i} < s_{i+1}$ for all $i$. The data set
$Z$ is a disjoint union of non-empty fibres of $d_{x}$: 
\begin{equation*}
  Z = p_{x}^{-1}(s_{1}) \sqcup p_{x}^{-1}(s_{2}) \sqcup \dots = 
  Z_{x}(s_{1}) \sqcup Z_{x}(s_{2}) \sqcup \dots.
\end{equation*}

For each $s_{i}$, there is a unique complete neighbourhood $Z(x,s_{i})$ of $x$,
with
\begin{equation*}
  Z(x,s_{i}) = d_{x}^{-1}(s_{1}) \sqcup \dots \sqcup d_{x}^{-1}(s_{i}).
\end{equation*}
The complete neighbourhoods of $x$ form a finite ascending tower
\begin{equation*}
  Z(x,s_{1}) \subset Z(x,s_{2}) \subset Z(x,s_{3}) \subset \dots
  \end{equation*}
Any complete neighbourhood $N$ with $Z(x,s_{i}) \subsetneqq N$ must have strictly greater radius $s_{N} > s_{i}$.

Suppose that $k$ is a positive integer and that $\vert Z \vert \geq k$. Then there is a smallest number $i$ such that $\vert Z(x,s_{i}) \vert \geq k$. In this case, the neighbourhood $Z(x,s_{i})$ is $k$-complete.

Alternatively, the $k$-complete neighbourhood $N$ of $x$ is the smallest complete neighbourhood such that $\vert N \vert \geq k$.

The element $x \in Z$ has a unique $k$-complete neighbourhood $N$ in $Z$, provided that $\vert Z \vert \geq k$. The $k$-complete neighbourhood $N$ is a well defined object, while there may be multiple sets of $k$-nearest neighbours of $x$

\begin{lemma}\label{lem 10}
Suppose that $Z_{1},Z_{2} \subset \mathcal{U}$, and that $x \in Z_{i}$. Suppose that $N_{i} \subset Z_{i}$ is the $k$-complete neighbourhood of $x$ in $Z_{i}$, and suppose that $N$ is the $k$-complete neighbourhood of $x$ in $Z = Z_{1} \cup Z_{2}$. Then $N \subset N_{1} \cup N_{2}$.
\end{lemma}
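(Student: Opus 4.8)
The plan is to use two facts about $k$-complete neighbourhoods: each one is a closed ball, and closed balls distribute over the union $Z = Z_1 \cup Z_2$. The basic identity I would record first is
\begin{equation*}
  Z(x,s) = Z_1(x,s) \cup Z_2(x,s),
\end{equation*}
valid for every radius $s$, which follows at once from $Z(x,s) = \{ y \in Z : d(x,y) \leq s \}$ together with the fact that $y \in Z$ if and only if $y \in Z_1$ or $y \in Z_2$. In particular $Z_i(x,s) \subset Z(x,s)$ for each $i$ and each $s$.

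Next I would name the radii. By the description of $k$-complete neighbourhoods recalled above, write $N = Z(x,r)$ with $r$ the smallest radius for which $\vert Z(x,r) \vert \geq k$, and $N_i = Z_i(x,a_i)$ with $a_i$ the smallest radius for which $\vert Z_i(x,a_i) \vert \geq k$; all three exist since $\vert Z \vert \geq \vert Z_i \vert \geq k$. The key inequality is then $r \leq a_i$ for $i = 1,2$: because $Z_i(x,a_i) \subset Z(x,a_i)$ we have $\vert Z(x,a_i) \vert \geq \vert Z_i(x,a_i) \vert \geq k$, and since $s \mapsto \vert Z(x,s) \vert$ is non-decreasing and $r$ is the least radius at which it reaches $k$, this forces $a_i \geq r$.

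To conclude I would run a direct membership check. Let $y \in N = Z(x,r)$. Then $y \in Z_1 \cup Z_2$, so $y \in Z_i$ for some $i$, and $d(x,y) \leq r \leq a_i$ gives $y \in Z_i(x,a_i) = N_i \subset N_1 \cup N_2$. Hence $N \subset N_1 \cup N_2$, as required. I expect the only step needing genuine care to be the inequality $r \leq a_i$, which rests on reading the $k$-complete neighbourhood correctly as the smallest closed ball carrying at least $k$ points; once that monotonicity point is pinned down the rest is purely formal, and no properties of the ep-metric beyond the definition of closed balls are used.
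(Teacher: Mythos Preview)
Your proof is correct and follows essentially the same route as the paper's: both arguments reduce to the radius comparison $r \leq a_i$ (the paper writes $s_N$ for your $r$), from which the inclusion $Z_i(x,r) \subset Z_i(x,a_i) = N_i$ and hence $N = Z_1(x,r) \cup Z_2(x,r) \subset N_1 \cup N_2$ follow immediately. Your version is in fact slightly cleaner: the paper opens by verifying that $Z_i \cap N$ is a set of nearest neighbours for $x$ in $Z_i$, a step it never subsequently uses, and then establishes $r \leq a_i$ by contrapositive (showing $\vert Z_i(x,s) \vert < k$ for $s < s_N$), whereas you go directly via $\vert Z(x,a_i) \vert \geq \vert Z_i(x,a_i) \vert \geq k$.
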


\begin{proof}
  The set $N$ is a set of nearest neighbours for $x$ in $Z_{1} \cup Z_{2}$, and $Z_{i} \cap N$ is a set of nearest neighbours for $x$ in $Z_{i}$.

  In effect, if $z \in Z_{i}$ is not in $Z_{i} \cap N$ then $z$ is not in $N$, so that $d(x,z) \geq s_{N}$, while $s_{N} \geq s_{Z_{i} \cap N}$.

  If there is an $s < s_{N}$ such that $\vert Z_{i}(x,s) \vert \geq k$, then $\vert Z(x,s) \vert \geq k$ for $s < s_{N}$, and so $N=Z(x,s_{N})$ is not $k$-complete. It follows that $\vert Z_{i}(x,s) \vert < k$ for $s < s_{N}$, and so $Z_{i}(x,s_{N}) \subset N_{i}$.

  Thus, $N \subset N_{1} \cup N_{2}$, as claimed
  \end{proof}

Lemma \ref{lem 10} leads to a method of approximating $k$-complete neighbourhoods for a point $x$ in a very large data set $\mathcal{U}$.

In effect, if $Z_{i} \subset \mathcal{U}, 1 \leq i \leq p$ is a collection of subsets of $\mathcal{U}$ with $x \in Z_{i}$, and if $N_{i} \subset Z_{i}$ is a $k$-complete neighbourhood of $x$ in $Z_{i}$, then the $k$-complete neighboourhood $N$ of $x$ in $Z_{1} \cup \dots \cup Z_{p}$ is the $k$-complete neighbourhood of $x$ in the much smaller object $N_{1} \cup \dots \cup N_{p}$.   

\section{Topological constructions}

Suppose that $(Z,d)$ is a finite ep-metric spac. 

      Suppose given a set of neighbourhoods $N_{x}$ for each $x \in Z$. Recall that the neighbourhood $N_{x}$ has a diameter $s(N_{x}) < \infty$.
       
       Each neighbourhood $N_{x}$ determines a filtered subcomplex $V(N_{x}) \subset V(Z)$ of the Vietoris-Rips complex $V(Z)$.
 
       The inclusions $\{x,y\} \subset V(N_{x})$, $y \in N_{x} -\{x\}$, induce filtered simplicial complex maps
\begin{equation}\label{eq 1}
   R(N_{x}):= \vee_{y}\ \Delta^{1}_{\geq s} \subset V(N_{x}) \subset V(Z).
\end{equation}
The copies of $\Delta^{1}$ are defined by rays $\{x,y\}$ of weights $s$.

\begin{remark}\label{rem 11}
More properly, if the ray $\{x,y\}$ has weight $t=d(x,y)$, then the corresponding $1$-simplex of $R(N_{x})$ is the filtered simplex $\Delta^{1}_{\geq t}$ such that
\begin{equation*}
  (\Delta^{1}_{\geq t})_{s} =
    \begin{cases}
      \emptyset & \text{if $s < t$, and} \\
      \Delta^{1} & \text{if $s \geq t$.}
    \end{cases}
\end{equation*}

It is better, sometimes, to say that $R(N_{x})$ is {\bf covered} by simplices $\Delta^{1}_{\geq s}$ corresponding to rays $\{x,y\}$ of weight $s$. This simply reflects the fact that the obvious map
\begin{equation*}
  \sqcup_{y}\ \Delta^{1}_{\geq s} \to \vee_{y}\ \Delta^{1}_{\geq s} = R(N_{x})
\end{equation*}
is an epimorphism of filtered complexes.
\end{remark}

The full union
\begin{equation*}
  R(N) = \cup_{x}\ R(N_{x}) \subset V(Z)
\end{equation*}
is the {\bf ray subcomplex} of $V(Z)$, for the collection of neighbourhoods $N = \{N_{x}\}$.

The ray subcomplex $R(N)$ is a filtered (or weighted) graph. If the neighbourhoods $N_{x}$ consist of $k$-nearest neighbours, then $R(N)$ is the $k$-nearest neighbours (kNN) graph.
\medskip
       
The neighbourhoods $N_{x}$ generate an abstract simplicial complex $V(N) \subset V(Z)$ whose simplices are the subsets $\sigma \subset N_{x}$ of the various neighbourhoods $N_{x}$. The resulting filtered simplicial complex can be written
\begin{equation*}
  V(N) = \cup_{x} \ V(N_{x}) \subset V(Z).
\end{equation*}
The subcomplex $V(N)$ of $V(Z)$ is called the {\bf neighbourhood complex}.
\medskip

The inclusions $R(N_{x}) \subset V(N_{x})$ induce an inclusion $R(N) \subset V(N)$, so we have inclusions
\begin{equation}\label{eq 2}
  R(N) \subset V(N) \subset V(Z)
\end{equation}
of filtered complexes, with corresponding inclusions
\begin{equation}\label{eq 3}
  R_{s}(N) \subset V_{s}(N) \subset V_{s}(Z)
\end{equation}
of the various filtration stages.

The induced functions
\begin{equation*}
  \pi_{0}R_{s}(N) \to \pi_{0}V_{s}(N) \to \pi_{0}V_{s}(Z)
\end{equation*}
in path components (or clusters)
are surjective for all parameters $t$, since all complexes have the same vertex set, namely $Z$.

\begin{remark}\label{rem 12}
  The neighbourhood complex $V(N) = \cup_{x}\ V(N_{x})$ is covered by the subcomplexes $V(N_{x})$, in the sense that there is a surjection
\begin{equation*}
  \bigsqcup_{x}\ V(N_{x}) \to V(N).
\end{equation*}
  This covering has an associated \v{C}ech resolution, and there is a natural coequalizer
  \begin{equation*}
    \bigsqcup_{x,y}\ V_{s}(N_{x}) \cap V_{s}(N_{y}) \rightrightarrows \bigsqcup_{x}\ V_{s}(N_{x}) \to V_{s}(N)
  \end{equation*}
  in simplicial sets, where $s$ is the distance parameter. The path component functor preserves colimits, so there is a coequalizer
  \begin{equation*}
    \bigsqcup_{x,y}\ \pi_{0}(V_{s}(N_{x}) \cap V_{s}(N_{y})) \rightrightarrows \bigsqcup_{x}\ \pi_{0}V_{s}(N_{x}) \to \pi_{0}V_{s}(N)
  \end{equation*}
  in diagrams of sets, or clusters.

  The directed set $\pi_{0}V_{s}(N)$ is the cluster object given by the mapper construction for the covering of $Z$ by the family of neighbourhoods $\{N_{x}\}$ \cite{CMS}.    
  \end{remark}

\begin{lemma}\label{lem 13}
Suppose that $x,y \in Z$. There is a path from $x$ to $y$ in $R(N)$ if and only if there is a sequence of elements
\begin{equation*}
  x=x_{0}, x_{1}, \dots ,x_{r}=y
\end{equation*}
and neighbourhoods $N_{x_{i}}$ of $x_{i}$, such that $N_{x_{i}} \cap N_{x_{i+1}} \ne \emptyset$ for all $i$.
\end{lemma}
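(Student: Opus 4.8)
The plan is to translate both conditions into statements about the edges of the graph $R(N)$ and then exploit the single structural fact that every centre lies in its own neighbourhood, i.e. $x \in N_x$ for all $x$. First I would record what an edge of $R(N) = \cup_x R(N_x)$ is: since each $R(N_x)$ is the wedge of the rays $\{x,y\}$ with $y \in N_x - \{x\}$, a pair $\{a,b\}$ with $a \ne b$ is an edge of $R(N)$ precisely when $b \in N_a$ or $a \in N_b$. A path from $x$ to $y$ in $R(N)$ is then a sequence of vertices $x = v_0, v_1, \dots, v_m = y$ in $Z$ in which each consecutive pair $\{v_j, v_{j+1}\}$ is such an edge.

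For the forward implication, I would simply take the sequence of vertices $v_0, \dots, v_m$ of the path itself as the required sequence $x_0, \dots, x_r$. For each $j$ the edge $\{v_j, v_{j+1}\}$ gives $v_{j+1} \in N_{v_j}$ or $v_j \in N_{v_{j+1}}$; in the first case $v_{j+1} \in N_{v_j} \cap N_{v_{j+1}}$ because $v_{j+1} \in N_{v_{j+1}}$, and in the second case $v_j \in N_{v_j} \cap N_{v_{j+1}}$ because $v_j \in N_{v_j}$. Either way $N_{v_j} \cap N_{v_{j+1}} \ne \emptyset$, so no further work is needed.

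For the reverse implication, given a sequence $x = x_0, \dots, x_r = y$ with $N_{x_i} \cap N_{x_{i+1}} \ne \emptyset$ I would choose a witness $w_i \in N_{x_i} \cap N_{x_{i+1}}$ for each $i$ and connect $x_i$ to $x_{i+1}$ through $w_i$. Since $w_i \in N_{x_i}$ the pair $\{x_i, w_i\}$ is a ray of $N_{x_i}$, hence an edge of $R(N)$, and likewise $\{w_i, x_{i+1}\}$ is an edge; concatenating the length-two paths $x_i \to w_i \to x_{i+1}$ over $i = 0, \dots, r-1$ produces a path from $x$ to $y$.

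The only genuine care required is in the degenerate cases of this last step, where a chosen witness $w_i$ coincides with one of its endpoints (for instance $w_i = x_i$, forcing $x_i \in N_{x_{i+1}}$, or $x_i = x_{i+1}$). In each such case the two vertices are either already equal or directly joined by a single ray, so the length-two detour collapses to a path of length at most one and the concatenation still goes through. I do not expect any real obstacle beyond this bookkeeping; the substance of the lemma is carried entirely by the self-membership property $x \in N_x$, which is what converts \emph{shares a neighbour} into \emph{connected by rays} in both directions.
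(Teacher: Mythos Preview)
Your argument is correct and follows essentially the same route as the paper's own proof: both directions hinge on the self-membership $x \in N_x$, with the forward direction reading off the intersection from an edge $\{v_j,v_{j+1}\}$ and the reverse direction passing through a witness $w_i \in N_{x_i} \cap N_{x_{i+1}}$ via the two rays $x_i \to w_i \leftarrow x_{i+1}$. Your treatment is slightly more explicit than the paper's in spelling out the edge condition and in handling the degenerate cases $w_i = x_i$ or $w_i = x_{i+1}$, which the paper leaves implicit.
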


\begin{proof}
Suppose that 
\begin{equation*}
  x=z_{0}, \dots ,z_{p}=y
\end{equation*}
is a sequence of points such that $x_{i+1} \in N_{x_{i}}$ or $x_{i} \in N_{x_{i+1}}$ for neighbourhoods $N_{x_{i}}$ and $N_{x_{i+1}}$ of $x_{i}$ and $x_{i+1}$, respectively.
If $x_{i+1} \in N_{x_{i}}$ then $N_{x_{i}} \cap N_{x_{i+1}} \ne \emptyset$. Similarly, if $x_{i} \in N_{x_{i+1}}$ then $N_{x_{i}} \cap N_{x_{i+1}} \ne \emptyset$.

Suppose, conversely, that $v \in N_{x_{i}} \cap N_{x_{i+1}}$. Then there is an edge $x_{i} \to v$ in $N_{x_{i}}$ and an edge $x_{i+1} \to v$ in $N_{x_{i+1}}$, so that there is a path
\begin{equation*}
  x_{i} \to v \leftarrow x_{i+1}
\end{equation*}
through neighbourhoods.
\end{proof}

By definition, the ray complex $R(N)$ is a filtered subcomplex of $V(Z)$. The subcomplex $R_{s}(N) \subset V_{s}(Z)$ is generated by rays $\{x,y\}$ with $d(x,y) \leq s$.
\medskip

We have the following analog of Lemma \ref{lem 13}:

\begin{lemma}\label{lem 14}
Suppose that $x,y \in Z$. For each parameter value $s$, there is a path from $x$ to $y$ in $R_{s}(N)$ if and only if there is a sequence of elements
\begin{equation*}
  x=x_{0}, x_{1}, \dots ,x_{r}=y
\end{equation*}
and  neighbourhoods $N_{x_{i}}$ of $x_{i}$, such that $(N_{x_{i}})_{s} \cap (N_{x_{i+1}})_{s} \ne \emptyset$ for all $i$.
\end{lemma}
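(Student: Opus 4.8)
The plan is to mirror the proof of Lemma \ref{lem 13} essentially verbatim, inserting the distance threshold $s$ at every step. First I would pin down the meaning of the filtered neighbourhood: $(N_x)_s$ is the vertex set of $R_s(N_x)$, namely $(N_x)_s = \{ u \in N_x : d(x,u) \le s\} = N_x \cap Z(x,s)$. The single observation that drives both implications is that the centre always survives filtration: since $d(x,x) = 0 \le s$, we have $x \in (N_x)_s$ for every $s \ge 0$.

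For the direction ($\Leftarrow$), suppose we are given the sequence $x = x_0, \dots, x_r = y$ together with neighbourhoods $N_{x_i}$ for which $(N_{x_i})_s \cap (N_{x_{i+1}})_s \ne \emptyset$. I would pick some $v \in (N_{x_i})_s \cap (N_{x_{i+1}})_s$ for each $i$. By the definition of $(N_{x_i})_s$, the ray $\{x_i, v\}$ has weight $d(x_i, v) \le s$, so it is an edge of $R_s(N_{x_i}) \subset R_s(N)$; likewise $\{x_{i+1}, v\}$ is an edge of $R_s(N)$. This produces the zig-zag $x_i \to v \leftarrow x_{i+1}$ in $R_s(N)$, exactly as in Lemma \ref{lem 13}, and concatenating over $i$ gives a path from $x$ to $y$ in $R_s(N)$.

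For the converse ($\Rightarrow$), I would start from a path $x = w_0, w_1, \dots, w_m = y$ in the graph $R_s(N)$, so each $\{w_j, w_{j+1}\}$ is an edge, that is, a ray of weight $\le s$ lying in some $R_s(N_w)$. By the construction of the ray complex, each such edge has one endpoint equal to the centre of a neighbourhood containing the other, with the two at distance $\le s$. I would take the path's own vertex sequence as the required $x_0, \dots, x_m$. For each edge, if $w_{j+1} \in N_{w_j}$ with $d(w_j, w_{j+1}) \le s$, then $w_{j+1} \in (N_{w_j})_s$, while $w_{j+1} \in (N_{w_{j+1}})_s$ because it is that neighbourhood's centre; the symmetric case $w_j \in N_{w_{j+1}}$ is handled the same way. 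In either case the shared endpoint witnesses $(N_{w_j})_s \cap (N_{w_{j+1}})_s \ne \emptyset$.

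The only point requiring care — and where this lemma genuinely differs from Lemma \ref{lem 13} — is that the witnessing vertex $v$ must now be $s$-close to both centres simultaneously, not merely a common element of the two neighbourhoods. In Lemma \ref{lem 13} any shared point sufficed; here I must guarantee $d(x_i, v) \le s$ and $d(x_{i+1}, v) \le s$ at once, which is precisely what membership in $(N_{x_i})_s \cap (N_{x_{i+1}})_s$ encodes, as opposed to membership in $N_{x_i} \cap N_{x_{i+1}}$. I expect no real obstacle beyond keeping this bookkeeping consistent: the triangle inequality is not even needed, since the filtered intersection condition already packages both distance constraints, and the remark that the centre survives filtration closes both implications.
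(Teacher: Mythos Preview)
Your proposal is correct and is exactly what the paper intends: Lemma \ref{lem 14} is stated without proof as ``the following analog of Lemma \ref{lem 13}'', and your argument is precisely the proof of Lemma \ref{lem 13} with the parameter $s$ inserted at each step. Your identification of $(N_x)_s$ as $N_x \cap Z(x,s)$ and the observation that the centre always survives filtration are the only extra ingredients needed, and you handle both directions just as the paper does for Lemma \ref{lem 13}.
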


\section{Bounded neighbourhoods}

\subsection{$k$-bounded neighbourhoods}

      In some examples (such as stellar charts), it is natural that neighbourhoods $N$ of $x$ have bounded cardinality and radius: $\vert N \vert \leq k+1$ for some $k$ and $s(N) \leq S$, with both $k$ and $S$ fixed.

      From this point of view, for a fixed $x$, the {\bf $k$-bounded neighbourhoods} $N$ of $x$ are the subsets of $Z(x,S)$ which contain $x$ and have at most $k+1$ elements. Again, $Z(x,S)$ is the ball of radius $S$ in $Z$, which is centred on $x$.

      We assume that $k \geq 1$ henceforth. 
      \medskip
      
A point $x$ can have more than one $k$-bounded neighbourhood. The
$k$-bounded neighbourhoods of $x$ are ordered by inclusion, and the family has maximal elements. We have the following:
  \begin{itemize}
   \item[1)]  The maximal $k$-bounded neighbourhboods $N \subset Z(x,S)$ either have cardinality $k+1$ or satisfy $N = Z(x,S)$.
     \smallskip
     
   \item[2)] All sets $N$ of $k$-nearest neighbours with $s(N) \leq S$ are maximal.
     \smallskip

\item[3)] If $N = \{x\}$ is maximal, then $x$ is an isolated point for the parameter $S$.
\end{itemize}

   The corresponding neighbourhood complex $V(k-N)$ is the filtered subcomplex of $V(Z)$ that is generated by the subobjects $V(N)$ for all $k$-bounded neighbourhoods $N$ of all $x$, and $R(k-N)$ is the  associated ray subcomplex. As in (\ref{eq 2}), we have a sequence of inclusions
   \begin{equation*}
     R(k-N) \subset V(k-N) \subset V(Z).
   \end{equation*}

If $t \leq S$ and $\sigma = \{x_{0}, \dots ,x_{n}\}$ is an $n$-simplex of $V(Z)_{t}$ with $n \leq k$, then $\sigma$ is a $k$-bounded neighbourhood of $x_{0}$. In effect, $\sigma$ has at most $k+1$ elements, of maximal distance $t \leq S$ from $x_{0}$. It follows that $V_{t}(k-N)_{n} = V_{t}(Z)_{n}$ for $n \leq k$, or that $\sk_{k}V_{t}(k-N) = \sk_{k}V_{t}(Z)$. In particular, $\sk_{1}V_{t}(k-N) = \sk_{1}V_{t}(Z)$ since $k \geq 1$, and so the simplicial sets $V_{t}(k-N)$ and $V_{t}(Z)$ have the same path components.

We have shown the following:

\begin{lemma}\label{lem 15}
  Suppose that $t \leq S$, and construct the neighbourhood complex $V(k-N)$ from $k$-bounded neighbourhoods as above. Then the function
  \begin{equation*}
\pi_{0}V_{t}(k-N) \to \pi_{0}V_{t}(Z)
    \end{equation*}
is a bijection.
  \end{lemma}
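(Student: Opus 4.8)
The plan is to exploit the fact that the set of path components of a simplicial complex depends only on its $1$-skeleton: two vertices lie in the same component precisely when they are joined by an edge-path, and higher simplices are irrelevant to $\pi_{0}$. So I would reduce the whole statement to the assertion that $V_{t}(k-N)$ and $V_{t}(Z)$ have the same $1$-skeleton for $t \leq S$. Surjectivity of the map $\pi_{0}V_{t}(k-N) \to \pi_{0}V_{t}(Z)$ is already in hand, since both complexes share the vertex set $Z$ (as noted after (\ref{eq 3})); the real content is injectivity, which will follow once the edge sets are shown to coincide.

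First I would compare the two $1$-skeleta directly. The inclusion $V_{t}(k-N) \subset V_{t}(Z)$ gives one direction for free, so the point is to show every edge of $V_{t}(Z)$ already appears in $V_{t}(k-N)$. Let $\{x_{0},x_{1}\}$ be such an edge, i.e.\ a pair with $d(x_{0},x_{1}) \leq t \leq S$. Here I would invoke the standing hypothesis $k \geq 1$: the two-element set $\{x_{0},x_{1}\}$ has cardinality $2 \leq k+1$ and radius $d(x_{0},x_{1}) \leq S$, so it is a $k$-bounded neighbourhood of $x_{0}$. Hence $\{x_{0},x_{1}\}$ is a simplex of $V(k-N)$, and at parameter $t$ it is an edge of $V_{t}(k-N)$. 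This yields $\sk_{1}V_{t}(k-N) = \sk_{1}V_{t}(Z)$ (and more generally $\sk_{k}V_{t}(k-N) = \sk_{k}V_{t}(Z)$, though only the $1$-skeleton is needed for $\pi_{0}$).

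With equal $1$-skeleta, I would finish by observing that any edge-path witnessing a coincidence of components in $V_{t}(Z)$ already lies in $V_{t}(k-N)$, so the surjection on $\pi_{0}$ is also injective, hence a bijection. I do not expect a genuine obstacle here; the argument is essentially formal once the principle ``$\pi_{0}$ sees only the $1$-skeleton'' is granted. The one step worth flagging is exactly the use of $k \geq 1$, which is what guarantees that an arbitrary edge of $V_{t}(Z)$ qualifies as a $k$-bounded neighbourhood — this is the crux of the comparison, and it is precisely why the hypothesis $t \leq S$ (so that the radius bound $S$ is respected) cannot be dropped.
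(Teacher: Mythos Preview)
Your proposal is correct and matches the paper's own argument essentially line for line: the paper shows that for $t \leq S$ any $n$-simplex of $V_{t}(Z)$ with $n \leq k$ is itself a $k$-bounded neighbourhood of its first vertex, hence $\sk_{k}V_{t}(k-N) = \sk_{k}V_{t}(Z)$, and then invokes $k \geq 1$ to conclude equality of $1$-skeleta and thus of $\pi_{0}$. Your identification of the role of the hypotheses $k \geq 1$ and $t \leq S$ is exactly right.
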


Suppose that $t \geq S$, and that $\{x,y\}$ is a $1$-simplex of $V_{t}(k-N)$. Then $\{x,y\} \subset N$ for a $k$-bounded neighbourhood $N$ of some $z$. Further, $d(z,x) \leq s(N)$ and $d(z,y) \leq s(N)$, so that $d(z,x),d(z,y) \leq s(N) \leq S \leq t$. It follows that there is a path
\begin{equation*}
  x \leftarrow z \to y
\end{equation*}
in $R_{t}(k-N)$, and so the function
\begin{equation*}
\pi_{0}R_{t}(k-N) \to \pi_{0}V_{t}(k-N)
\end{equation*}
is a bijection.

We have proved

\begin{lemma}\label{lem 16}
  Suppose that $t \geq S$. Then the induced function
\begin{equation*}
\pi_{0}R_{t}(k-N) \to \pi_{0}V_{t}(k-N)
\end{equation*}
is a bijection.
  \end{lemma}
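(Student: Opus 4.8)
The plan is to exploit that the map on path components is induced by an inclusion $R_{t}(k-N) \subset V_{t}(k-N)$ of filtered complexes sharing the same vertex set $Z$, so that surjectivity is automatic (as already observed for the general maps $\pi_{0}R_{s}(N) \to \pi_{0}V_{s}(N)$). The entire content therefore lies in injectivity, which I would reformulate combinatorially: since the path components of a simplicial set are the classes of the equivalence relation on vertices generated by its $1$-simplices, it suffices to show that the two endpoints of every $1$-simplex of $V_{t}(k-N)$ already lie in a common path component of $R_{t}(k-N)$.

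So first I would fix a $1$-simplex (edge) $\{x,y\}$ of $V_{t}(k-N)$ and unwind the definition of the neighbourhood complex: by construction the simplices of $V(k-N)$ are the subsets of the various $k$-bounded neighbourhoods, so $\{x,y\} \subset N$ for some $k$-bounded neighbourhood $N$ of some centre $z$, with $z \in N$ by the definition of a neighbourhood. The key quantitative step is then to record the radius bound: because $N$ is $k$-bounded we have $s(N) \leq S$, and since $x,y \in N$ this gives $d(z,x) \leq s(N) \leq S \leq t$ and likewise $d(z,y) \leq t$, where the hypothesis $t \geq S$ is used precisely here.

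Next I would produce the connecting path. The inequalities $d(z,x) \leq t$ and $d(z,y) \leq t$ say exactly that the rays $\{z,x\}$ and $\{z,y\}$, which are edges of the ray complex $R(N)$ attached to the centre $z$, are present at filtration level $t$, i.e.\ they are $1$-simplices of $R_{t}(k-N)$. Hence $x \leftarrow z \to y$ is a path in $R_{t}(k-N)$, so $x$ and $y$ lie in the same component there (the degenerate cases $x=z$, $y=z$, or $x=y$ being trivial). Running this over all edges of $V_{t}(k-N)$ shows the two induced equivalence relations on $Z$ coincide, giving injectivity, and combined with the automatic surjectivity the map is a bijection.

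The argument is short, and the only real obstacle is conceptual rather than computational: one must notice that the radius cap $S$ on $k$-bounded neighbourhoods, together with the hypothesis $t \geq S$, is exactly the condition that makes every generating edge of the neighbourhood complex factor through a length-two path of rays emanating from a common centre that are already ``switched on'' at parameter $t$. Once the centre $z$ of a neighbourhood containing the edge is identified, the radius bound does all the work, and no control on the simplicial structure beyond the $1$-skeleton is required, since $\pi_{0}$ detects only edges.
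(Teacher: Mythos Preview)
Your proof is correct and follows essentially the same approach as the paper: pick a $1$-simplex $\{x,y\}$ of $V_{t}(k-N)$, locate a $k$-bounded neighbourhood $N$ of some centre $z$ containing it, use the radius bound $s(N) \leq S \leq t$ to see that the rays $\{z,x\}$ and $\{z,y\}$ lie in $R_{t}(k-N)$, and conclude via the path $x \leftarrow z \to y$. The paper's proof is terser but the argument is identical.
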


Write
\begin{equation*}
  R(k-N) = \cup_{t}\ R_{t}(k-N).
  \end{equation*}
Then the map
\begin{equation*}
  \pi_{0}R_{t}(k-N) \to \pi_{0}R(k-N)
  \end{equation*}
is a bijection for $t \geq S$, because $R_{t}(k-N) = R(k-N)$ in that range.

We therefore have the following:
\begin{corollary}
 The functions
  \begin{equation*}
    \pi_{0}R(k-N) \leftarrow \pi_{0}R_{t}(k-N) \to \pi_{0}V_{t}(k-N)
  \end{equation*}
  are bijections for all $t \geq S$.
  \end{corollary}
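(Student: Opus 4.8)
The plan is to read off the corollary by assembling the two bijection statements that immediately precede it, so the work is almost entirely bookkeeping. First I would dispose of the right-hand map: for $t \geq S$ the map $\pi_{0}R_{t}(k-N) \to \pi_{0}V_{t}(k-N)$ is a bijection, which is precisely the content of Lemma \ref{lem 16}. Nothing further is needed here.

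Next I would treat the left-hand map $\pi_{0}R_{t}(k-N) \to \pi_{0}R(k-N)$, which is induced by the inclusion of filtration stages $R_{t}(k-N) \subset R(k-N) = \cup_{s}\ R_{s}(k-N)$. The key observation is that the ray complex $R(k-N)$ is generated by rays $\{x,y\}$ coming from $k$-bounded neighbourhoods $N$, and every such neighbourhood satisfies $s(N) \leq S$ by definition. Hence each generating ray has weight $d(x,y) \leq s(N) \leq S$, so no new ray can appear in $R_{s}(k-N)$ once $s \geq S$. Since $R(k-N)$ is a graph (a $1$-dimensional complex), having all of its edges already present forces $R_{t}(k-N) = R(k-N)$ for every $t \geq S$. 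The inclusion is then an equality, so the induced map on path components is the identity, and in particular a bijection.

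Combining the two facts exhibits both maps as bijections for $t \geq S$, as claimed. I expect no real obstacle: the only point requiring genuine verification is that every ray of $R(k-N)$ has weight at most $S$, and this is immediate from the boundedness built into the definition of a $k$-bounded neighbourhood. Everything else is the formal observation that a filtered graph stabilizes once its parameter exceeds the maximal edge weight.
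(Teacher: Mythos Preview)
Your proposal is correct and follows the paper's argument exactly: the right-hand bijection is Lemma~\ref{lem 16}, and the left-hand bijection is the observation that every ray of $R(k-N)$ has weight at most $S$, so $R_{t}(k-N) = R(k-N)$ for $t \geq S$. The paper states this in one line; your version simply spells out why the stabilization occurs.
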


Write
\begin{equation*}
 N(x) = \cup_{N}\ V(N)
\end{equation*}
in $V(Z)$, where the union is indexed over all $k$-bounded neighbourhoods $N$ of $x$. Let $R(x) \subset N(x)$ be the associated ray subcomplex.
    We have the inclusions
    \begin{equation*}
      VR(x) \subset VN(x) \subset V(Z(x,S)).
    \end{equation*}

    Suppose that $t \leq S$ and
    $n +1\leq k$. Suppose that $\sigma = \{x_{0}, \dots ,x_{n}\}$ is a non-degenerate $n$-simplex of $V(Z(x,S))_{t}$.
    Write
    \begin{equation*}
      \sigma_{x} = \{x,x_{0}, \dots ,x_{n}\}.
    \end{equation*}
    Then $\vert \sigma_{x} \vert \leq k+1$, so that $\sigma_{x}$ is a $k$-bounded neighbourhood of $x$, and so $\sigma = d_{0}\sigma_{x}$ is in the image of the composite
\begin{equation*}
  V(\sigma_{x})_{t} \to VN(x)_{t} \to V(Z(x,S))_{t}.
\end{equation*}

It follows that $\sk_{k-1}V_{t}N(x) = \sk_{k-1}V_{t}(Z(x,S))$ for $t \leq S$.
In particular, the map
\begin{equation*}
  \pi_{0}V_{t}N(x) = \pi_{0}V_{t}(Z(x,S))
\end{equation*}
  is a bijection if $k \geq 2$ and $t \leq S$.

Suppose that $t \geq S$, and that $y$ and $z$ are vertices of $N(x)$. Then $d(x,y), d(x,z) \leq S \leq t$, and it follows that the map
\begin{equation*}
  \ast = \pi_{0}R(x)_{t} \to \pi_{0} N(x)_{t}
\end{equation*}
is a bijection.

Every $y \in Z(x,S)$ is a member of a $k$-bounded neighbourhood $\{x,y\}$ since $k \geq 1$. It follows that the maps
\begin{equation*}
  \pi_{0}R(x)_{t} \to \pi_{0}N(x)_{t} \to \pi_{0}V_{t}Z(x,S)
\end{equation*}
are surjective.

We have proved:

\begin{lemma}\label{lem 18}
  Suppose that the complexes $N(x)$ and $R(x)$ are defined as above. Suppose that $k \geq 2$. Then we have the following:
  \begin{itemize}
  \item[1)] If $t \leq S$ then the map $\pi_{0}N(x)_{t} \to \pi_{0}V_{t}(Z(x,s))$ is a bijection.
  \item[2)] If $t \geq S$ then the maps
\begin{equation*}
  \ast =\pi_{0}R(x)_{t} \to \pi_{0}N(x)_{t} \to \pi_{0}V(Z(x,S))_{t}
\end{equation*}
  are bijections.
\end{itemize}
  \end{lemma}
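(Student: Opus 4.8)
The plan is to treat the two ranges of $t$ separately, since both the comparison complex and the mechanism differ. The organizing observation, used throughout, is that $R(x)_t$, $N(x)_t$ and $V(Z(x,S))_t$ all share the single vertex set $Z(x,S)$: each $y \in Z(x,S)$ lies in the two-element $k$-bounded neighbourhood $\{x,y\}$ (legitimate since $k \geq 1$), so its vertex, together with the ray $\{x,y\}$, is already present in $R(x)$. Because $\pi_0$ of a simplicial complex is the coequalizer of its edges acting on its vertices, it depends only on the $1$-skeleton; hence any inclusion of complexes sharing a vertex set induces a surjection on $\pi_0$. This reduces both assertions to a discussion of edges (and, for part 1, of higher skeleta).

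For part 1 ($t \leq S$) I would show that the inclusion $N(x)_t \subset V(Z(x,S))_t$ is an equality on $(k-1)$-skeleta, which upgrades the surjection above to a bijection once $k \geq 2$. The key move is a coning construction: given a non-degenerate $n$-simplex $\sigma = \{x_0, \dots, x_n\}$ of $V(Z(x,S))_t$ with $n+1 \leq k$, adjoin the centre $x$ to form $\sigma_x = \{x, x_0, \dots, x_n\}$. Then $\vert \sigma_x \vert \leq k+1$, and $s(\sigma_x) \leq S$ since each $x_i$ lies in $Z(x,S)$, so $\sigma_x$ is a genuine $k$-bounded neighbourhood of $x$ and thus $V(\sigma_x) \subset N(x)$. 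As $\sigma$ is a face of $\sigma_x$ of diameter $\leq t$, it lies in $V(\sigma_x)_t \subset N(x)_t$, so $\sigma$ is recovered at the correct filtration level. Running this over all $n \leq k-1$ yields $\sk_{k-1}N(x)_t = \sk_{k-1}V(Z(x,S))_t$; the case $n = 1$, which forces $k \geq 2$, captures every edge, and since $\pi_0$ sees only the $1$-skeleton the map is a bijection.

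For part 2 ($t \geq S$) I would argue that $R(x)_t$ is already connected, so everything collapses to a point. Indeed any vertex $y$ of $N(x)$ lies in $Z(x,S)$, whence $d(x,y) \leq S \leq t$ and the ray $\{x,y\}$ is an edge of $R(x)_t$; thus every vertex is joined to the centre $x$ and $\pi_0 R(x)_t = \ast$. Combining this with the surjectivity of $\pi_0 R(x)_t \to \pi_0 N(x)_t \to \pi_0 V(Z(x,S))_t$ noted above forces both targets to be singletons as well, so all three are $\ast$ and the maps are bijections.

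I expect the only genuine friction to lie in the bookkeeping for part 1: one must verify that the coned simplex $\sigma_x$ is simultaneously small enough in cardinality (which is what pins down the hypotheses $n+1 \leq k$ and ultimately $k \geq 2$) and of bounded radius $s(\sigma_x) \leq S$, and that the recovered face $\sigma$ re-enters $N(x)$ at filtration level $t$ rather than only in the unfiltered complex. Part 2 is essentially formal once the common-vertex-set and surjectivity observations are in hand.
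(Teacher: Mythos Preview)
Your proposal is correct and follows essentially the same route as the paper: the coning construction $\sigma \mapsto \sigma_{x}=\{x\}\cup\sigma$ to obtain $\sk_{k-1}N(x)_{t}=\sk_{k-1}V_{t}(Z(x,S))$ for part~1, and the observation that every vertex is joined to $x$ by a ray of length $\leq S\leq t$ (combined with surjectivity on $\pi_{0}$ from the common vertex set) for part~2. Your write-up is in fact slightly more explicit than the paper's in checking the radius bound $s(\sigma_{x})\leq S$ and in articulating why surjectivity on $\pi_{0}$ is automatic.
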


\subsection{Complete neighbourhoods}

Suppose that $Z$ is a finite ep-metric space, and that each $x \in Z$ has a fixed complete neighbourhood $N_{x}=Z(x,r_{x})$. 
Form the associated filtered complexes
\begin{equation*}
R(N) \subset V(N) \subset V(Z),
\end{equation*}
for $Z$ and the system of neighbourhoods $N = \{N_{x}\}$.

\begin{example}\label{ex 19}
Suppose that $S>0$ is a fixed distance parameter and $k > 1$ is a fixed integer.

Say that a neighbourhood $N_{x}$ of $x \in Z$ is {\bf complete $k$-bounded} if $N_{x}$ has the form
\begin{equation*}
  N_{x} = Z(x,s_{x}) \cap Z(x,S),
\end{equation*}
where $Z(x,s_{x})$ is the unique $k$-complete neighbourhood of $x$ (see Section 2).

There are two possibilities:\ $N_{x} = Z(x,s_{x})$, in which case $N_{x}$ is $k$-complete, or $N_{x}=Z(x,S)$ and $\vert Z(x,S) \vert < k$. In either case, the neighbourhood $N_{x}$ is uniquely determined and is complete.
\smallskip

The use of {\it complete} $k$-bounded neighbourhoods gives a different perspective for the stellar chart example. For a fixed (and appropriate) distance $S$ and positive integer $k$, the complete $k$-bounded neighbourhoods $N_{x}$ of stars $x$ in a globular cluster would be $k$-complete neighbourhoods of small radius, while stars in an outer spiral arm are more likely to have neighbourhoods $N_{x}$ of smaller cardinality.
\end{example}

\begin{lemma}\label{lem 20}
  Suppose that $Z$ is a finite ep-metric space, and that each $x \in Z$ has a fixed complete neighbourhood $N_{x}=Z(x,r_{x})$.
 \begin{itemize}
\item[1)] Suppose that $t \leq r_{x}$ for all $x$. Then the functions
\begin{equation*}
\pi_{0}R_{t}(N) \to \pi_{0}V_{t}(N) \to \pi_{0}V_{t}(Z)
\end{equation*}
are bijections.
\item[2)] Suppose that $t \geq r_{x}$ for all $x$. Then the map
\begin{equation*}
\pi_{0}R_{t}(N) \to \pi_{0}V_{t}(N)
\end{equation*}
is a bijection.
\item[3)] Suppose that $t \geq S \geq r_{x}$ for all $x$. Then the map
\begin{equation*}
\pi_{0}R_{S}(N) \to \pi_{0}R_{t}(N)
\end{equation*}
is a bijection.
\end{itemize}
\end{lemma}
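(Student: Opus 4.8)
The plan is to prove all three statements by comparing $1$-skeleta, or equivalently the equivalence relations on the common vertex set $Z$ generated by the edges of the complexes involved. The one structural fact I would use throughout is the defining feature of a complete neighbourhood: since $N_x = Z(x,r_x)$ is a full closed ball, we have $y \in N_x$ if and only if $d(x,y) \leq r_x$. This lets me pass freely between edges of the Vietoris--Rips complex and rays of $R(N)$, which is exactly what each comparison requires.

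For statement 1) I would show that the three graphs $R_t(N)$, $\sk_1 V_t(N)$ and $\sk_1 V_t(Z)$ all coincide. One always has $R_t(N) \subseteq \sk_1 V_t(N) \subseteq \sk_1 V_t(Z)$, so it suffices to check that every edge $\{a,b\}$ of $V_t(Z)$ already lies in $R_t(N)$. Since $d(a,b) \leq t \leq r_a$ we get $b \in Z(a,r_a) = N_a$, so $\{a,b\}$ is a ray of $N_a$ of weight $d(a,b) \leq t$, hence an edge of $R_t(N)$. This forces equality of all three graphs, and because $\pi_0$ of a simplicial complex depends only on its $1$-skeleton, both maps $\pi_0 R_t(N) \to \pi_0 V_t(N) \to \pi_0 V_t(Z)$ are bijections. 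This is the analogue of Lemma \ref{lem 15}, with the uniform bound $S$ replaced by the pointwise radii $r_x$.

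For statement 2) I would argue as in Lemma \ref{lem 16}. Surjectivity holds since the complexes share the vertex set $Z$, so only injectivity is at issue, i.e. that each edge of $V_t(N)$ can be replaced by a path in $R_t(N)$. Given an edge $\{a,b\}$ of $V_t(N)$ there is a $z$ with $\{a,b\} \subseteq N_z = Z(z,r_z)$, so $d(z,a), d(z,b) \leq r_z \leq t$; the rays $\{z,a\}$ and $\{z,b\}$ then lie in $R_t(N)$ and give a path $a \leftarrow z \to b$. Hence the edges of $V_t(N)$ generate the same equivalence relation on $Z$ as $R_t(N)$, and the map is a bijection. Statement 3) is the most elementary: every ray $\{x,y\}$ of $R(N)$ has $y \in N_x = Z(x,r_x)$ and therefore weight $d(x,y) \leq r_x \leq S$, so every ray is already present at parameter $S$. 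Thus $R_S(N) = R(N) = R_t(N)$ for all $t \geq S$, and the map $\pi_0 R_S(N) \to \pi_0 R_t(N)$ is induced by an equality, hence trivially bijective.

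I do not anticipate a genuine obstacle; the whole argument turns on the single observation that completeness makes $N_x$ a metric ball. The only point that needs minor care is the chain of inclusions in statement 1): one must verify that $\sk_1 V_t(N)$ is literally squeezed between $R_t(N)$ and $\sk_1 V_t(Z)$, so that no higher-dimensional simplices of $V_t(N)$ enter the comparison, and that completeness (not merely some neighbourhood containing $\{a,b\}$) is what guarantees the containment $b \in N_a$ in the pointwise-$t \leq r_x$ regime.
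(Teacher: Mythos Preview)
Your proposal is correct and follows essentially the same approach as the paper: for (1) you establish the equality of $1$-skeleta $\sk_{1}R_{t}(N)=\sk_{1}V_{t}(N)=\sk_{1}V_{t}(Z)$ via $d(a,b)\leq t\leq r_{a}\Rightarrow b\in N_{a}$, for (2) you use the two-ray path $a\leftarrow z\to b$ through the center of the ambient neighbourhood, and for (3) you observe $R_{S}(N)=R_{t}(N)$ since all rays have weight at most $S$. These are exactly the arguments the paper gives, and your references to Lemma~\ref{lem 15} and Lemma~\ref{lem 16} as models are apt.
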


\begin{proof}
For 1), suppose that $\{x,y\}$ is a $1$-simplex of length $t$. Then $y \in N_{x}$ since $t \leq r_{x}$, and $\{x,y\}$ is a ray of $N_{x}$. It follows that there are equalities of $1$-skeleta
\begin{equation*}
\sk_{1}R_{t}(N) = \sk_{1}V_{t}(N) = \sk_{1}V_{t}(Z),
\end{equation*}
and the statement follows.

For statement 2), suppose that $\{x,y\}$ is a $1$-simplex of $V_{t}(N_{z}) \subset V_{t}(N)$. Then there are $1$-simplices $x \leftarrow z \rightarrow y$ in $V_{t}(N_{z})$ since $t \geq r_{z}$. This is true for all $z$, and it follows that the function
\begin{equation*}
\pi_{0}R_{t}(N) \to \pi_{0}V_{t}(N)
\end{equation*}
is a bijection.

To prove statement 3), every ray ($1$-simplex) $\{x,y\}$ of $R(N)$ has length $\leq S$, so that $R_{S}(N) = R_{t}(N)$.
\end{proof}

\begin{corollary}\label{cor 21}
Suppose that $t \geq S \geq r_{x}$ for all $x \in Z$. Then the inclusion $V_{S}(N) \subset V_{t}(N)$ of neighbourhood complexes induces a bijection
\begin{equation*}
\pi_{0}V_{S}(N) \to \pi_{0}V_{t}(N).
\end{equation*}
\end{corollary}

\begin{proof}
The Corollary follows from statements 2) and 3) of the Lemma \ref{lem 20}
\end{proof}

\begin{remark}\label{rem 22}
  Suppose that $Z'$ is the subset of elements $x \in Z$ such that $Z(x,r_{x}) = \{x\}$, and let $Z"=Z-Z'$. Then
  \begin{itemize}
    \item[1)]
      $V_{t}(Z) = Z' \sqcup V_{t}(Z'')$,
    \item[2)] $V(N)_{t} = Z' \sqcup V(N'')_{t}$,
      \item[3)]
        $R(N)_{t}=  Z' \sqcup R(N'')_{t}$,
        \end{itemize}
  for $t \leq r_{z}$, all $z$, where $Z'$ is a discrete set. Here,
\begin{equation*}
  Z'' = \cup_{y \in Z''}\ Z(y,r_{y}),
\end{equation*}
  and $N''$ is the system of neighbourhoods $Z(y,n_{y})$ for $y \in Z''$.
\end{remark}

    \section{The UMAP construction}

One starts with a neighbourhood $N_{x}$ for each vertex $x$ of a data set $Z$, with positive weights $d(x,y)$ for each $y \in N_{x}-\{x\}$. The subset $\{x,y\}$ for such a $y$ is said to be a ray.

The weight $d(x,y)$ defines an ep-metric space structure on the set $\{x,y\}$.
Form the ep-metric space
\begin{equation*}
  Z_{x}= \vee_{y \in N(x)-\{x\}}\ \{x,y\},
  \end{equation*}
from the rays $\{x,y\}$, for each $x \in Z$. This structure is extended to an ep-metric space structure $(Z,D_{x})$ on the full set of vertices $Z$ of $\Gamma$, by setting
\begin{equation*}
  (Z,D_{x}) = (\sqcup_{z \in Z-Z_{x}}\ \{z\}) \sqcup Z_{x}
\end{equation*}
in ep-metric spaces.

The ep-metric space
\begin{equation*}
  (Z,D) = \vee_{x \in Z}\ (Z,D_{x})
\end{equation*}
and the UMAP complex
\begin{equation*}
  V(Z,N) = \vee_{x \in Z}\ V(Z,D_{x})
\end{equation*}
are formed by amalgamating along vertices (elements of $Z$), in ep-metric spaces and filtered complexes, respectively.

It is crucial, for these ep-metric space constructions, to know that the category of ep-metric spaces is cocomplete --- see Section 1.
\medskip

The following excision statement for path components is Lemma 2 of \cite{UMAP-stab}:

\begin{theorem}\label{th 23}
The canonical map $V(Z,N) \to V(Z,D)$ induces isomorphisms
\begin{equation*}
  \pi_{0}V(Z,N)_{s} \xrightarrow{\cong} \pi_{0}V(Z,D)_{s}
\end{equation*}
for $s$ finite.
\end{theorem}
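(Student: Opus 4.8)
The plan is to analyse the induced map on path components one filtration stage at a time, exploiting the fact that $V(Z,N)_{s}$ and $V(Z,D)_{s}$ share the common $0$-skeleton $Z$. First I would note that the canonical map $V(Z,N) \to V(Z,D)$ is built from the non-expanding maps $(Z,D_{x}) \to (Z,D)$: each is non-expanding because $D \leq D_{x}$ (a single $D_{x}$-edge is one of the admissible paths in the amalgamation infimum described in Section 1), and these assemble over the wedge into a simplicial map that is the identity on vertices. Consequently $\pi_{0}V(Z,N)_{s} \to \pi_{0}V(Z,D)_{s}$ is automatically surjective, since every component of the target contains a vertex of $Z$. All the content therefore lies in injectivity, that is, in showing that two vertices lying in the same component of $V(Z,D)_{s}$ already lie in the same component of $V(Z,N)_{s}$.

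To prove injectivity it suffices to treat a single edge of $V(Z,D)_{s}$: if $D(z,w) \leq s$ with $s$ finite, I must connect $z$ to $w$ inside $V(Z,N)_{s}$. Here I would invoke the explicit amalgamation formula of Section 1: for the wedge $(Z,D) = \bigvee_{x} (Z,D_{x})$ over the common vertex set, a single step of an admissible path has length $\min_{x} D_{x}(v_{i},v_{i+1})$, and $D(z,w)$ is the infimum of the resulting path sums. Because $Z$ is finite the edge-weights $\min_{x} D_{x}(a,b)$ take only finitely many values, so this is an ordinary finite shortest-path problem and the infimum is attained by a simple path $z=v_{0},v_{1},\dots,v_{n}=w$ with total length $D(z,w) \leq s$. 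In particular each step satisfies $\min_{x} D_{x}(v_{i},v_{i+1}) \leq s$, and this minimum is realised by some index $x_{i}$ with $D_{x_{i}}(v_{i},v_{i+1}) \leq s$.

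The final step is a lifting argument. For each $i$, the inequality $D_{x_{i}}(v_{i},v_{i+1}) \leq s$ exhibits $\{v_{i},v_{i+1}\}$ as a $1$-simplex of $V(Z,D_{x_{i}})_{s}$, and the wedge inclusion $V(Z,D_{x_{i}})_{s} \hookrightarrow V(Z,N)_{s}$, which is the identity on the shared vertex set $Z$, carries this edge into $V(Z,N)_{s}$. Concatenating over $i$ produces a path from $z$ to $w$ in $V(Z,N)_{s}$, so $z$ and $w$ lie in the same component there. This is exactly the injectivity required, and combined with surjectivity it yields the asserted bijection. The hypothesis that $s$ is finite enters precisely here: in each $(Z,D_{x})$ the vertices of $Z - Z_{x}$ sit at infinite distance, so at finite $s$ they contribute no edges and cannot appear as interior vertices of a realising path.

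I expect the only genuine obstacle to be the middle step, namely converting the abstract infimum defining $D$ into a concrete budget-respecting path each of whose individual steps is realised at level $s$ inside one of the local spaces $(Z,D_{x_{i}})$. This is where finiteness of $Z$ does the work, guaranteeing that the infimum is a minimum over simple paths, and it is also the point at which one must check that routing a step through a ray-centre never inflates it beyond $s$. Since a Vietoris--Rips edge is present as soon as the pairwise distance is at most $s$, this potential complication does not actually arise, and the argument closes cleanly.
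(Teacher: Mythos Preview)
Your proposal is correct and follows essentially the same route as the paper: surjectivity from the identity on vertices, and injectivity by unwinding the amalgamation formula for $D$ into a path whose individual steps each lie in some $V(Z,D_{x_{i}})_{s}$, exactly the ``distances in $(Z,D)$ are computed from paths through neighbourhoods'' observation that the paper cites (and which it spells out in the parallel proof of Theorem~\ref{th 27}). Your explicit use of finiteness of $Z$ to convert the infimum into an attained minimum over simple paths is a detail the paper's sketch leaves implicit, but it is the right way to close that step.
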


Theorem \ref{th 23} is proved by observing that distances in $(Z,D)$ are computed from paths through neighbourhoods $N_{x}$.
\medskip

We shall need the following local computation:

\begin{lemma}\label{lem 24}
Suppose that $y \in N(x)-\{x\}$ defines the ray $\{x,y\}$. Then
\begin{equation*}
  d(x,y)=D_{x}(x,y).
\end{equation*}
in $Z_{x}$.
\end{lemma}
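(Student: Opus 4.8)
The plan is to realize $(Z_x, D_x)$ as a quotient of a coproduct and then exploit the quotient-metric formula and its universal property from Section 1. Recall that $Z_x = \vee_{y}\ \{x,y\}$ is formed by amalgamating the rays $\{x,y\}$ along their common point $x$. Concretely, I would present it as the quotient $p : \bigsqcup_{y}\ \{x_y, y\} \to Z_x$ of the coproduct of rays --- each ray carrying its own copy $x_y$ of the basepoint together with the edge-weight $d(x,y)$ --- obtained by identifying all the copies $x_y$ to a single vertex $x$. The underlying set of $Z_x$ is then $N(x)$, and $D_x$ is precisely the ep-metric on $N(x)$ induced by the surjection $p$ via the quotient construction of Section 1.

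For the upper bound $D_x(x,y) \le d(x,y)$, I would use the one-term path. The pair $(x_y, y)$ lies inside the $y$-th ray, with $p(x_y)=x$ and $p(y)=y$, and its coproduct-cost is $d(x_y,y)=d(x,y)$. Feeding this single pair into the infimum defining $D_x$ gives the bound at once.

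The lower bound is the crux: one must rule out that a path passing through other rays of the wedge undercuts $d(x,y)$. Rather than enumerate such paths, I would build a non-expanding test function $g : Z_x \to [0,\infty]$, where $[0,\infty]$ carries the Euclidean ep-metric $|a-b|$, by setting $g(x)=0$, $g(y)=d(x,y)$, and $g(a)=0$ for every other neighbour $a \ne y$. By the universal property of the quotient $p$, the map $g$ is non-expanding if and only if $g\circ p$ is non-expanding on the coproduct. On the $y$-th ray the only nontrivial distance is $d(x_y,y)=d(x,y)$, and $|(g\circ p)(x_y)-(g\circ p)(y)| = d(x,y)$ equals that ray-distance; on every other ray both values of $g\circ p$ vanish; and across distinct rays the coproduct distance is $\infty$, so nothing is required. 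Hence $g\circ p$ is non-expanding, so $g$ is non-expanding, and therefore $d(x,y) = |g(x)-g(y)| \le D_x(x,y)$. Combining the two bounds gives $D_x(x,y)=d(x,y)$.

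The main obstacle is exactly this lower bound --- guaranteeing that detours through the remaining rays cannot shorten the connection from $x$ to $y$. The function $g$ disposes of it cleanly because it detects only the $y$-ray and collapses all other rays to $0$; equivalently one could argue combinatorially that any finite-cost path in the star-shaped graph $Z_x$ must cross the edge $\{x,y\}$ at least once and so accrues cost at least $d(x,y)$, but the test-function argument avoids the case analysis and, incidentally, does not even use positivity of the weights.
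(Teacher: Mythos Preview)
Your proof is correct, but it takes a different route from the paper's. The paper argues directly and combinatorially: in any path $x=x_{0},x_{1},\dots,x_{p}=y$ through rays of $Z_{x}$, the final step $\{x_{p-1},y\}$ must be the ray $\{x,y\}$ itself (since $y$ lies on no other ray of the wedge), so every such path has cost at least $d(x,y)$; combined with the one-term upper bound this finishes in two lines. You instead invoke the universal property of the quotient ep-metric and a $1$-Lipschitz potential $g$ that separates $y$ from the rest of the star. The paper's argument is shorter and exploits the star shape of $Z_{x}$ explicitly; your test-function approach is more structural, sidesteps any path enumeration, and would transplant readily to other quotient constructions. You in fact mention the combinatorial alternative in your last paragraph --- that is precisely the paper's proof.
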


\begin{proof}
The number $D_{x}(x,y)$ is the minimum of all sums $\sum_{j}\ d(x_{j},x_{j+1})$, for paths
\begin{equation*}
  P:\ x=x_{0},x_{1},\dots, x_{p}=y
\end{equation*}
through rays in $Z_{x}$. The ray $\{x_{p-1},y\}$ must be the ray $\{x,y\}$, so that
\begin{equation*}
  D_{x}(x,y) \geq \sum_{j}\ d(x_{j},x_{j+1}) \geq d(x,y).
\end{equation*}
  The subobject $\{x,y\}$ is a ray, so that $\{x,y\}$ is a path, and $D_{x}(x,y) \leq d(x,y)$.
\end{proof}

Each $y \in N_{x} - \{x\}$ determines an inclusion of filtered complexes
\begin{equation*}
  V(\{x,y\}) \subset V(Z_{x},D_{x}) \subset V(Z,D).
\end{equation*}
The simplicial set $V_{t}(\{x,y\}$ consists of vertices $\{x,y\}$ for $t < d_{x}(x,y)$, and has $1$-simplices
\begin{equation*}
\{x\} \subset \{x,y\} \supset \{y\}
\end{equation*}
for $t \geq d_{x}(x,y)$.

\begin{remark}
Recall that $V(\{x,y\})$ is the barycentric subdivison of a filtered $1$-simplex that would be defined by imposing a total order on the set $\{ x,y \}$.
\end{remark}

Suppose that $\{x,u\}$ is a ray in $N_{x}$ and that $\{y,v\}$ is a ray of $N_{y}$, and consider the composite monomorphisms
\begin{equation*}
  V(\{x,u\}) \subset V(Z_{x}) \subset V(Z),\quad V(\{y,v\}) \subset V(Z_{y}) \subset V(Z).
  \end{equation*}
Suppose that $d_{x}(x,u) \leq d_{y}(y,v)$.

Generally, $V(X) = BP(X)$, where $P(X)$ is a poset of generating simplices. In the case at hand, therere is a pullback diagram
\begin{equation*}
\xymatrix{
BP(\{x,u\} \cap \{y,v\}) \ar[r] \ar[d] & BP(\{y,v\}) \ar[d] \\
BP(\{x,u\}) \ar[r] & BP(Z)
}
\end{equation*}

The intersection $\{x,u\} \cap \{y,v\}$ is at most a $2$-element set.
If $\{x,u\} \cap \{y,v\} = \emptyset$ the pullback is empty, and if
 $\{x,u\} \cap \{y,v\}$ is a point the pullback is a point. 

If $\{x,u\} \cap \{y,v\}$ is a $2$-element set, then $\{x,u\} = \{y,v\}$,  
and there is a commutative diagram
\begin{equation*}
\xymatrix{
BP(\{x,u\}) \ar[dr] && BP(\{y,v\} \ar[ll]_{\theta} \ar[dl]\\
& BP(Z)
}
\end{equation*}
where $\theta$ ``reduces weight''. It follows, in this case, that
there is a pullback
\begin{equation}\label{eq 4}
\xymatrix{
BP(\{y,v\}) \ar[r]^{1} \ar[d]_{\theta} & BP(\{y,v\}) \ar[d] \\
BP(\{x,u\}) \ar[r] & BP(Z)
}
\end{equation}

The ray complex $R(N_{x}) \subset V(Z_{x})$ is the wedge of rays
\begin{equation*}
  R(N_{x}) = \vee_{y \in N_{x}-\{x\}}\ V(\{x,y\}).
\end{equation*}
The filtered complex monomorphisms
\begin{equation*}
  \phi_{x}: R(N_{x}) \to V(Z_{x},D_{x}) \to V(Z,D), 
\end{equation*}
together define a monomorphism
\begin{equation*}
  \phi: R(N) = \cup_{x\in Z}\ R(N_{x}) \subset V(Z),
\end{equation*}
and we say that the union $R(N)$ is the ray subcomplex of $V(Z)$. The ray complex $R(N)$ is a weighted graph.

The ray complex $R(N)$ is a union of (or is covered by) filtered subcomplexes $V(\{x,y\})$, which are defined by rays $\{x,y\}$ and their weights $d(x,y)$. The intersections (pullbacks)
\begin{equation*}
\xymatrix{
BP(\{y,v\} \cap \{y,v\}) \ar[r] \ar[d] & BP(\{y,v\}) \ar[d] \\
BP(\{x,u\}) \ar[r] & R(N)
}
\end{equation*}
are constructed in $V(Z)$ as above, since $R(N) \subset V(Z)$ is a monomorphism. It follows that the ray complex $R(N)$ is a union of rays, with possible adjustments of weights in intersections, as in the pullback diagram (\ref{eq 4}).

\begin{remark}
The present description of the ray complex $R(N)$ is independent of distances in the space $(Z,D)$. It generalizes the description of the ray complex that appears in Section 3, which uses a fixed ambient ep-metric.
\end{remark}

There is, finally, an excision result that makes $R(N)$ a candidate for the UMAP graph, as follows:

\begin{theorem}\label{th 27}
  The filtered complex map $\phi: R(N) \subset V(Z,D)$ induces isomporphisms
  \begin{equation*}
    \phi_{\ast}: \pi_{0}R_{s}(N) \xrightarrow{\cong} \pi_{0}V_{s}(Z,D)
    \end{equation*}
for all $s \geq 0$. 
\end{theorem}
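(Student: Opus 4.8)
The plan is to show that the comparison map $\phi_{*}$ is both surjective and injective on path components at every parameter value $s$.

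Surjectivity is immediate: $\phi$ is the identity on vertices, every path component of $V_{s}(Z,D)$ contains a vertex of $Z$, and every such vertex lies in $R_{s}(N)$. One should also record that $\phi$ is a well-defined map of filtered complexes: a ray $\{x,y\}$ of weight $d(x,y) \le s$ satisfies $D(x,y) \le D_{x}(x,y) = d(x,y) \le s$ by Lemma \ref{lem 24} together with the inequality $D \le D_{x}$ valid for the amalgamation, so it is a $1$-simplex of $V_{s}(Z,D)$. For injectivity, since path-connectivity in $V_{s}(Z,D)$ is generated by its $1$-simplices, it suffices to prove: if $\{z,w\}$ is a $1$-simplex of $V_{s}(Z,D)$, i.e.\ $D(z,w) \le s$, then $z$ and $w$ lie in the same path component of $R_{s}(N)$. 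Granting this, any edge path in $V_{s}(Z,D)$ joining $x$ to $y$ is replaced segment by segment by a path in $R_{s}(N)$, which yields $\phi_{*}([x]) = \phi_{*}([y]) \Rightarrow [x]=[y]$.

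The heart of the matter is to unwind the amalgamated distance $D$. By the wedge formula of Section 1,
\begin{equation*}
 D(z,w) = \inf_{P}\ \sum_{i} \min_{x}\ D_{x}(u_{i},u_{i+1}),
\end{equation*}
where $P$ ranges over strings $z=u_{0},u_{1},\dots,u_{n}=w$. This exhibits $D$ as the shortest-path metric of the finite weighted graph on $Z$ with edge weights $w(u,v)=\min_{x} D_{x}(u,v)$, and finiteness of $Z$ guarantees the infimum is attained. Fix a realizing path with $\sum_{i} w(u_{i},u_{i+1}) \le s$, and for each $i$ choose a centre $c_{i}$ with $w(u_{i},u_{i+1}) = D_{c_{i}}(u_{i},u_{i+1})$; this forces $u_{i},u_{i+1} \in N_{c_{i}}$. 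Because $Z_{c_{i}}$ is a wedge of rays through $c_{i}$, any path in $Z_{c_{i}}$ passes through $c_{i}$, so Lemma \ref{lem 24} gives that $D_{c_{i}}(u_{i},u_{i+1})$ equals $d(c_{i},u_{i})$, $d(c_{i},u_{i+1})$, or $d(c_{i},u_{i}) + d(c_{i},u_{i+1})$ according to whether one of $u_{i},u_{i+1}$ is $c_{i}$. In each case the step $u_{i}\to u_{i+1}$ is realized by at most two rays at $c_{i}$ (a zigzag $u_{i}\to c_{i}\to u_{i+1}$ in the last case), and each such ray has weight at most $w(u_{i},u_{i+1}) \le s$. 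Concatenating over $i$ produces a path in $R_{s}(N)$ from $z$ to $w$.

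The step I expect to be the main obstacle is exactly this decomposition: one must see that realizing $D(z,w)\le s$ bounds not merely the total length of the connecting path, but the weight of each individual ray occurring in it. This succeeds because every generating term $\min_{x} D_{x}(u_{i},u_{i+1})$ of the wedge distance is either a single ray weight or a sum of two ray weights sharing the common centre $c_{i}$, so each constituent weight is dominated by the term and hence by $s$. An alternative, more structural route factors $\phi_{*}$ through $\pi_{0}V_{s}(Z,N)$: the local inclusions $R(N_{x}) \subset V(Z,D_{x})$ induce bijections $\pi_{0}R_{s}(N_{x}) \xrightarrow{\cong} \pi_{0}V_{s}(Z,D_{x})$ since $Z_{x}$ is a star of rays (again via Lemma \ref{lem 24}), these assemble over the common vertex set to a bijection $\pi_{0}R_{s}(N) \cong \pi_{0}V_{s}(Z,N)$ because $\pi_{0}$ preserves the defining colimits, and one then invokes Theorem \ref{th 23}.
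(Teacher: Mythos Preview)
Your argument is correct and follows essentially the same route as the paper's proof: surjectivity from the identity on vertices, injectivity by realizing $D(z,w)\le s$ as a path through rays each of weight $\le s$. The paper is terser, asserting directly that there is a path $z=x_{0},\dots,x_{p}=w$ through rays with $\sum d(x_{i},x_{i+1})\le s$ ``by Lemma \ref{lem 24}''; your version makes explicit the unwinding of the iterated wedge $(Z,D)=\vee_{x}(Z,D_{x})$ with $Z_{x}=\vee_{y}\{x,y\}$, including the zigzag $u_{i}\to c_{i}\to u_{i+1}$ when neither endpoint is the centre. That extra care is welcome, as is your observation that finiteness of $Z$ makes the infimum attained (the paper uses this implicitly). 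The alternative factorization through $\pi_{0}V_{s}(Z,N)$ and Theorem \ref{th 23} you sketch at the end is also valid and is the ``more structural'' proof the paper alludes to when it says the argument is similar to that of Theorem \ref{th 23}.
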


\begin{proof}
  The proof is similar to that of Theorem \ref{th 23}.
  
The map $\phi$ is the identity on vertices, so the functions $\phi_{\ast}$ are surjective.
  
  Suppose that there is a $1$-simplex $\{z,w\}$ of $V(Z,D)_{s}$. Then there is a path
  \begin{equation*}
    z=x_{0},x_{1}, \dots ,x_{p}=w
  \end{equation*}
  through rays $\{x_{i},x_{i+1}\}$ such that
  \begin{equation*}
    \sum_{i=0}^{p}\ d(x_{i},x_{i+1}) \leq s.
  \end{equation*}
  by Lemma \ref{lem 24}.
  But then $d(x_{i},x_{i+1}) \leq s$ for all $i$, so that $z$ and $w$ are in the same path component of the simplicial set $R(N)_{s}$.

  It follows that the functions $\phi_{\ast}$ are injective.
\end{proof}

\section{Weighted directed graphs}

A weighted directed graph $\Gamma$ consists of a set of edges $e:x \to y$, such that each edge $e$ has a weight $w(e) > 0$. For the present discussion, the vertices of $\Gamma$ are faces of the edges. I write $Z$ for the set of vertices of $\Gamma$.

Trivial examples are given by $1$-skeleta $\sk_{1}K$ of oriented simplicial complexes $K$, with weights $w(e) = 1$ for each $1$-simplex $e: x \to y$, and such that every vertex is a face of some non-degenerate $1$-simplex $e$.
\medskip

A weighted directed graph $\Gamma$ is said to be {\bf sparse} if all vertices $x$ of $\Gamma$ have low valence. This means that each  vertex of $\Gamma$ is in the boundary of a small (i.e. computable) number of edges.

Suppose that $x$ is a vertex of a transfer graph $\Gamma$.
A {\bf path} $P: x \dashrightarrow y$ from $x$ to another vertex $y$ in $\Gamma$ is a string of edges
\begin{equation}\label{eq 5}
  P:\ x=y_{0} \xrightarrow{e_{1}} y_{1} \xrightarrow{e_{2}} \dots \xrightarrow{e_{p}} y_{p} = y.
  \end{equation}
Say that the integer $p$ is the length $\ell(P)$ of the path $P$.

\begin{remark}
The collection of all paths $P: x \dashrightarrow y$ in the graph $\Gamma$ form a weighted graph $P(\Gamma)$ having the same vertices as the graph $\Gamma$.
The paths $P: x \dashrightarrow y$ and $Q: y \dashrightarrow z$ are composeable: the concatenation of $P$ with $Q$ defines a path $P \circ Q: x \dashrightarrow z$. Thus, $P(\Gamma)$ has more structure: $P(\Gamma)$ is the free category on the graph $\Gamma$.

The path graph $P(\Gamma)$ is not sparse in general.
\end{remark}

The weight $w(P)$ of the path $P$ can be defined by
\begin{equation}\label{eq 6}
  w(P) = \min_{i}\ \{w(e_{i})\}.
\end{equation}

\begin{remark}
  The definition of the weight of a path is somewhat arbitrary, and depends on applications. The assignment of (\ref{eq 6}) is motivated by graphs of data transfers, which are discussed below. One could, alternatively, set
  \begin{equation*}
    w(P) = \sum_{i}\ w(e_{i}).
    \end{equation*}
\end{remark}

Fix a positive integer $k$.
\medskip

The {\bf neighbourhood} $N_{k}(x)$ is the collection of all vertices $y$, which appear in paths
\begin{equation*}
Q:\  z_{0} \to z_{1} \to \dots \to z_{p},
\end{equation*}
having length $p=\ell(Q) \leq k$, such that $x=z_{i}$ for some $i$
\medskip

We assign a weight (or distance) $d(x,y)$ for all $y$ in the neighbourhood $N_{k}(x)$.

For $y \in N_{k}(x)$, define the {\bf weight sum} $\Sigma(x,y)$ by
\begin{equation*}
  \Sigma(x,y) = (\sum_{P: x \dashrightarrow y,\ell(P) \leq k}\ w(P)) + (\sum_{Q: y \dashrightarrow x,\ell(P) \leq k}\ w(Q)).
    \end{equation*}

In a graph of transactions, the weight sum $\Sigma(x,y)$ represents the total value of all transactions between $x$ and $y$. If $\Sigma(x,y)$ has a large value, then there is more business between $x$ and $y$, and these objects should be closer in some sense. To express this relationship, use the Shannon information function to define a distance
    \begin{equation}\label{eq 7}
      d(x,y) = e^{-\Sigma(x,y)}
    \end{equation}
    for $y \in N_{k}(x)$.

    Other approaches to defining a distance $d(x,y)$ for the vertices $y$ of $N_{k}(x)$ are certainly possible.
    \medskip
    
    We end up with a computable neighbourhood $N_{k}(x)$ of vertices in a sparse directed graph $\Gamma$ for each of its vertices $x$, with distances (weights) $d(x,y)$ for $y \in N_{k}(x) - \{x\}$.

    These are the inputs for the UMAP construction, which is described in Section 4.

\begin{example}[{\bf Data transfers}]\label{ex 30}
  A data transfer $e: x \to y$ from a computer account $x$ to a different account $y$ has a weight $w(e)$, which is the number of bytes transferred. The transfer $e$ also has source and target time stamps, $s(e)$ and $t(e)$, respectively, with $s(e) < t(e)$. Thus (provisionally), a graph $\Gamma$ of data transfers has edges $e: (x,s(e)) \to (y,t(e))$ with $x \ne y$, and its vertices consist of pairs $(x,t)$, where $x$ is a computer account and $t$ is either a source or a target timestamp for some edge.

  There may be multiple vertices $(x,t)$ for a fixed account $x$. Suppose that $t_{0} < t_{2} < \dots < t_{p}$ are the timestamps for a fixed account $x$. Say that the list $\{t_{0}, \dots ,t_{p}\}$ is the simplex of timestamps for the account $x$.

  For $x \ne y$, an edge $E: (x,s) \to (y,t)$ of the transfer graph $\Gamma$ consists of a transfer $e: (x,s(e)) \to (y,t(e))$, together with relations $s \leq s(e)$ and $t(e) \leq t$ in the simplices of timestamps for the accounts $x$ and $y$, respectively. The weight $w(E)$ is the weight $w(e)$ of the transfer $e$. The set vertices $Z$ of $\Gamma$ consists of all pairs $(x,s)$ of accounts $x$ and timestamps $s$ of transfers.
  
If all timestamps lie within a small enough interval, then the transfer graph $\Gamma$ is sparse.

This example motivates the definitions of weights of paths and distances within neighbourhoods that are seen above.
\smallskip

Explicitly, a path
   \begin{equation*}
    P:\ (x,s) = x_{0} \xrightarrow{E_{1}} x_{1} \xrightarrow{E_{2}} \dots \xrightarrow{E_{p}} x_{p} = (y,t)
  \end{equation*}
in $\Gamma$ consists of edges
  \begin{equation*}
    E_{i}: x_{i} = (x_{i},s_{i}) \to (x_{i+1},s_{i+1}) = x_{i+1}
  \end{equation*}
with $x_{i} \ne x_{i+1}$, and each such edge has weight $w(E_{i})$.

The weight $w(P)$ of the path $P$ is defined by
  \begin{equation*}
    w(P) = \min_{i} \{w(E_{i})\},
  \end{equation*}
  as in (\ref{eq 6})).
  The weight $w(P)$ represents the maximum amount of data that could be transferred from $(x,s)$ to $(y,t)$ along the path $P$.

Fix a positive integer $k$ and an element $x=(x,s)$ in the transfer graph $\Gamma$, and define the neighbourhood $N_{k}(x)$ as vertices of paths crossing $x$ of length at most $k$.

The weight sum $\Sigma(x,y)$ for $y \in N_{k}(x)$ is defined by
\begin{equation*}
  \Sigma(x,y) = (\sum_{P: x \dashrightarrow y,\ell(P) \leq k}\ w(P)) + (\sum_{Q: y \dashrightarrow x,\ell(P) \leq k}\ w(Q)),
\end{equation*}
and the weight $d(x,y)$ of the ray $\{x,y\}$ has the form
\begin{equation*}
  d(x,y) = e^{-\Sigma(x,y)}.
  \end{equation*}
  \end{example}

\begin{remark}[{\bf Undirected graphs}]\label{rem 31}
  The directed structure for the graph $\Gamma$ is a central feature of the examples discussed above. Analogous local to global methods apply equally well to construct ep-metric spaces and UMAP complexes for undirected graphs.
  \medskip
  
  Suppose that $\Omega$ is a sparse weighted graph, with weights $w(e)$ for the edges $e$ of $\Omega$. One assumes that the vertices of $\Omega$ are faces of its edges.

  Suppose that $x$ is a vertex of $\Omega$. Say that $y \in N_{k}[x]$ if there is a path (path), or string of edges
  \begin{equation*}
   P:\ x=x_{0} \overset{e_{1}}{\leftrightarrow} x_{1} \overset{e_{2}}{\leftrightarrow} \dots \overset{e_{p}}{\leftrightarrow} x_{p}=y
  \end{equation*}
  with $p \leq k$.
  
  Again there are choices, but define the weight $w(P)$ of the path $P: x \leftrightarrow y$ by
\begin{equation*}
  w(P) = \min_{i}\ \{w(e_{i})\}.
  \end{equation*}

Fix a vertex $x$ and a positive integer $k$. Define $N_{k}(x)$ to be the set of all vertices of $\Omega$ which lie on paths of length $\ell(P)$ at most $k$ that pass through $x$.

Write
  \begin{equation*}
   \Sigma(x,y) = \sum_{P: x \dashrightarrow y, \ell(P) \leq k}\ w(P),
  \end{equation*}
  and set
\begin{equation*}
  d(x,y) = e^{-\Sigma(x,y)}
\end{equation*}
for $y \in N_{k}(x)$.

One uses the weights $d(x,y)$ to construct an ep-metric on the neighbourhood $N_{k}(x)$. These ep-metrics patch together, to give an ep-metric on the full set $Z$ of vertices of $\Omega$.
\end{remark}

\section{Bags of words}

In the ``bag of words'' model for natural language processing (see, for example \cite{bleiLDA}), one starts with a collection $C=\{C_{1}, \dots ,C_{N}\}$  of {\bf documents} $C_{i}$, where each $C_{i} = (t_{i,1}, \dots ,t_{i,M_{i}})$ is a sequence of {\bf tokens} (ie. words, phrases, etc.), with possible repetitions.  The sequence $C$ is the {\bf corpus}.

The sequence $C_{i}$ is a function
\begin{equation*}
  C_{i}: \underline{M}_{i} \to \mathcal{T},
\end{equation*}
where $\mathcal{T}$ is the set of distinct tokens in all $C_{i}$, and $\underline{M}_{i} = \{1,2, \dots ,M_{i}\}$. The sequence $C_{i}$ may have repeats, so the function $C_{i}$ is not injective in general.

The usual thing is to amalgamate some tokens (by root words, or whatever), to form a surjective  map $\ell: \mathcal{T} \to \mathcal{L}$. The set $\mathcal{L}$ is the {\bf vocabulary} and its elements are called {\bf words}.

Write $p$ for the composite function
\begin{equation*}
  p: \sqcup_{i}\ \underline{M}_{i} \xrightarrow{C} \mathcal{T} \xrightarrow{\ell} \mathcal{L},
  \end{equation*}
and let $p_{i}: \underline{M}_{i} \to \mathcal{L}$ be the restriction of $p$ to the summand $\underline{M}_{i}$.

We assume that there are no common tokens (``stop words'') or rare tokens in the set $\mathcal{T}$, however these are determined. This means that the fibres $p^{-1}(w)$ of the function $p$ are neither too large nor too small, and in particular are computationally manageable. The function $p$ and its fibres are the objects
of interest for this discussion.

The fibres $p^{-1}(w)$ are the instances of the word $w \in \mathcal{L}$ in the corpus $C$.

\begin{remark}
In more generality, we could have functions $p_{i}: \underline{M}_{i} \to Z$ which cover a set $Z$, in the sense that the amalgamated function
\begin{equation*}
  p: \sqcup_{i}\ \underline{M}_{i} \to Z
\end{equation*}
is surjective. Here, the restriction of $p$ to the summand $\underline{M}_{i}$ is $p_{i}$. One assumes that the fibres $p_{i}^{-1}(z)$ for $z \in Z$ are computationally manageable (or tractable in the sense of the next section), as is the collection of functions $\{p_{i}\}$.

Subject to size assumptions on the cardinals $\underline{M}_{i}$ and the collection of functions $p_{i}$, the following discussion can be applied in such a setting.

One could even replace the sets $\underline{M}_{i}$ with metric spaces in the discussion that follows.
\end{remark}

Write $\mathcal{L}_{i}$ for the image of the restricted function
\begin{equation*}
 p_{i} = \ell_{i}: \underline{M}_{i} \xrightarrow{C_{i}} \mathcal{T} \xrightarrow{\ell} \mathcal{L}.
\end{equation*}
The composite $p_{i}$ restricts to a surjective function $p_{i}: \underline{M}_{i} \to \mathcal{L}_{i}$, and there is a commutative diagram of functions
\begin{equation*}
  \xymatrix{
    \underline{M}_{i} \ar[r]^{p_{i}} \ar[d] & \mathcal{L}_{i} \ar[d] \\
    \sqcup_{i}\ \underline{M}_{i} \ar[r]_{p} & \mathcal{L}
  }
\end{equation*}
in which the vertical maps are inclusions.
\medskip

Each set $\underline{M}_{i}$ has a metric $d$ with $d(x,y) = \vert y-x \vert$.
\medskip
    
Suppose that $r$ is a positive integer. Fix a word $v \in \mathcal{L}$, and suppose
 that $p^{-1}_{i}(w)_{\leq r}$ is the set of all elements $y \in p_{i}^{-1}(w)$ such that $d(x,y) \leq r$ for some $x \in p_{i}^{-1}(v)$. Then we have
    \begin{equation*}
    p_{i}^{-1}(w)_{\leq r} = p_{i}^{-1}(w) \cap (\cup_{x \in p^{-1}(v)}\ [x-r,x+r])
    \end{equation*}
    in the set $\underline{M}_{i}$.
    The subsets $p^{-1}_{i}(w)_{\leq r}$ filter the fibre $p_{i}^{-1}(w)$. Observe that $p_{i}^{-1}(v)_{\leq r} = p_{i}^{-1}(v)$.
\medskip    

Set
 \begin{equation}\label{eq 8}
d_{i}[r](v,w) = \sum_{x \in p_{i}^{-1}(v),y \in p_{i}^{-1}(w), d(x,y) \leq r}\ d(x,y),
  \end{equation}
and define
\begin{equation*}
  d[r](v,w) = \sum_{i}\ d_{i}[r](v,w)
  \end{equation*}
for all $v,w \in \mathcal{L}$.

The number $d_{i}[r](v,w)$ is non-zero if and only if there are elements $x \in p_{i}^{-1}(v)$ and $y \in p_{i}^{-1}(w)$ such that $d(x,y) \leq r$, and $d[r](v,w) \ne 0$ if and only if $d_{i}[r](v,w) \ne 0$ for some $i$.

In particular, $d_{i}[r](v,v)$ is the sum of the distances $d(x,y)$ between $x,y \in p_{i}^{-1}(v)$ such that $d(x,y) \leq r$, and $d_{i}[0](v,v)=0$.
It follows that $d[r](v,v)$ can be non-trivial for $r > 0$, and $d[0](v,v) = 0$.
\medskip

Take all elements $x$ of the fibres $p_{i}^{-1}(v)$ and form all intervals $[x-r,x+r]$ in $\underline{M}_{i}$. The union
    \begin{equation}\label{eq 9}
      N_{v}[r] = \cup_{i}\ (\cup_{x \in p_{i}^{-1}(v)}\ p_{i}[x-r,x+r]) \subset \mathcal{L}.
      \end{equation}
    is a neighbourhood of $v$ in $\mathcal{L}$.

    Observe that $N_{v}[0] = \{v\}$. Also, $N_{v}[r] \subset N_{v}[s]$ for $r \leq s$, and $\cup_{r}\ N_{v}[r] = \mathcal{L}$, so that the subsets $N_{v}[r]$ filter the set of words $\mathcal{L}$.
    \medskip
    
   Subject to fixing a positive integer $r$, the set $N_{v}[r]$ is a neighbourhood for $v \in \mathcal{L}$, and the number $d[r](v,w)$ is the weight of $w \in N_{v}[r]$.

   As in Section 5, the UMAP construction assembles the weighted neighbourhoods $(N_{v}[r],d[r])$, $v \in \mathcal{L}$, to form a the UMAP complex $V(\mathcal{L},N[r])$, an ep-metric space $(\mathcal{L},D[r])$, and a ray complex $R(N[r]) \subset V(\mathcal{L},D[r])$, all of which compute the same clusters.
     
\section{Sampling}

Suppose that the universal data set $\mathcal{U}$ has an ep-metric space structure, but with no other information.

In this case, one approximates (or discovers) a neighbourhood $N_{x}$ for a given point $x \in \mathcal{U}$ with a brute force method that is based on sampling techniques and construction of $k$-complete neighbourhoods within samples.
\medskip
 
Suppose that $Z$ is a randomly chosen subset of $\mathcal{U}$ (a sample), and that $Z$ is tractable in the sense that there is a cardinality bound
$\vert Z \vert \leq M$, where data sets of size at most $M$ can be analyzed by available computational devices.
We assume that $x \in Z$.

For such a subset $Z$ the distance function $d_{x}: Z \to [0,\infty)$, with $d_{x}(z) = d(x,z)$, can be computed, and the image $d_{x}(Z)$ of $d_{x}$ defines a tractable subset of the interval $[0,\infty)$. The set $Z$ is a disjoint union of fibres
    \begin{equation*}
      Z = \sqcup_{s \in d_{x}(Z)}\ d_{x}^{-1}(s)
    \end{equation*}
    of the distance function $d_{x}$.

    Suppose that $k$ is a fixed choice of positive integer with $k \leq \vert Z \vert$.
    
    The element $x$ has a uniquely defined $k$-complete neighbourhood $N$ in $Z$, as in Section 2, which is the smallest complete neighbourhood $Z(x,s)$ such that $\vert Z(x,s) \vert \geq k$. The neighbourhood $N$ is a union of fibres $d_{x}^{-1}(t)$ for smallest values of $t$, 

    This construction can be repeated, in parallel, for an appropriately sized collection of samples $Z_{1}, \dots ,Z_{p}$ that contain $x$, with distance functions $d_{x}: Z_{i} \to [0,\infty)$. Each sample $Z_{i}$ has a uniquely defined (and computable) $k$-complete neighbourhood $N_{i}$ of $x$, and the $k$-complete neighbourhood $N$ of $x$ in the union $\cup_{i}\ Z_{i}$ is a $k$-complete neighbourhood of $x$ in the smaller object $\cup_{i}\ N_{i}$, by Lemma \ref{lem 10}. 
      \medskip
      
There are various ways to invoke the samples $Z_{i}$:
\medskip

\noindent
1)\ Starting with a $k$-complete neighbourhood $N_{x}$ of $x$ in a sample $Z_{x}$, choose samples $Z_{y}$ for each $y \in N_{x}$, with associated $k$-complete neighbourhoods $N_{y} \subset Z_{y}$. The union $\cup_{y \in N_{x}}\ N_{y}$ contains a $k$-complete neighbourhood $N'_{x}$, which is potentially a better approximation of a $k$-complete neighbourhood of $x$ in the universe $\mathcal{U}$.

This sequence of steps is an analogue of the $k$-nearest neighbour algorithm of \cite{DML}.
\medskip

\noindent
2)\
The determination of a $k$-complete neighbourhood $N$ of $x$ in $V$ for some $V \subset \mathcal{U}$ can be extended to larger subsets of $\mathcal{U}$, subject to computational constraints, by adding more tractable samples to $V$. This is again a simple application of Lemma \ref{lem 10}.
\medskip

\noindent
3)\ If $Z_{1}, \dots ,Z_{p}$ is a tractable collection of tractable samples in $\mathcal{U}$, then we can find a $k$-complete neighbourhood $N_{y}$ in $Z = \cup_{i}\ Z_{i}$ for any $y \in Z$. The corresponding subcomplexes $V(N_{y}) \subset V(Z)$ and $R(N_{y}) \subset V(N_{y})$ determine filtered subcomplexes
      \begin{equation*}
        \cup_{y \in Z}\ R(N_{y}) \subset \cup_{y \in Z}\ V(N_{y}),
      \end{equation*}
      which lead to a UMAP-style analysis that computes the clusters of $V(Z)$, and approximates the clusters of all $V(\mathcal{U})$.
\medskip
 
    The sampling technique displayed here is completely brute force. It only approximates clusters and neighbouhoods of points, and does not speak to the entire data set $\mathcal{U}$.
    
    The method can be refined in the presence of global constraints, such as the local uniformity assumption of \cite{DML} that produces sets of $k$-nearest neighbours up to a probability estimate.

 \bigskip




\nocite{github}

\bibliographystyle{plain}
\bibliography{spt}

\end{document}